\documentclass[amssymb,10pt]{article}
\usepackage[pagewise]{lineno}
\usepackage{graphicx}
\usepackage{indentfirst}
\usepackage{amsmath}
\usepackage{color}

\usepackage{amssymb}
\usepackage{amsthm}
\usepackage{titletoc}
\usepackage{amssymb}
\usepackage{amsxtra}
\usepackage{mathrsfs}
\usepackage{amsmath,amstext,amsfonts,verbatim, url}
\usepackage{epstopdf}
\usepackage{subfig}
\usepackage{psfrag}

\usepackage{tikz}

\usepackage{xy}

\usepackage[colorlinks=true]{hyperref}
\usepackage{amsfonts}
\usepackage{mathrsfs}
\usepackage{amsthm,amsmath,amssymb,mathrsfs,amsfonts,graphicx,graphics,latexsym,exscale,cmmib57,dsfont,bbm,amscd,ulem}
\usepackage{color,soul}
\frenchspacing

\textwidth=16. true cm \textheight=22. true cm
\voffset=-1.6 true cm \hoffset=-1.8 true cm

\newtheorem{thm}{Theorem}[section]
\newtheorem{lem}[thm]{Lemma}
\newtheorem{pro}[thm]{Proposition}

\def\cal#1{\fam2#1}

\def\T{{\mathbb T}}

\def\bb{\begin}
\def\be{\begin{equation}}
	\def\ee{\end{equation}}
\def\bea{\begin{eqnarray}}
	\def\eea{\end{eqnarray}}
\def\beaa{\begin{eqnarray*}}
	\def\eeaa{\end{eqnarray*}}

\def\ifl{\iffalse}

\def\bb{\begin}
           \def\ea{\end{array}}
          \def\ec{\end{center}}
     \def\ed{\end{description}}
\def\be{\bb{equation}}        \def\ee{\end{equation}}
\def\bea{\bb{eqnarray}}       \def\eea{\end{eqnarray}}
\def\beaa{\bb{eqnarray*}}     \def\eeaa{\end{eqnarray*}}
 \def\et{\end{thebibliography}}


       \def\F{{\cal F}}




\def\Int{{\rm Int}}


\begin{document}

\title{Technical Developments of DA on $\mathbb{T}^3$}

\date{}
\maketitle

{\center
Hangyue Zhang

\smallskip

Department of Mathematics

Nanjing University

Nanjing 210093, China

\smallskip

\smallskip

\footnote{
2020 Mathematics Subject Classification. 37D30, 37C40, 37D25.

Key words and phrases.   partially hyperbolic diffeomorphisms, partially volume-expanding, mixed center, physical measures, Gibbs $u$-states.
}

\smallskip

}

\begin{abstract}
We constructed a DA on $\mathbb{T}^3$, which complements the work of Gan, Li, Viana, and Yang  (\cite{GanLiVianaYang2021}) by providing an example of a $C^\infty$-diffeomorphism with partial volume expansion, where $\dim(E^{cs}) = 2$. In contrast to their work, in \cite{GanLiVianaYang2021}, they provided an example of a non-invertible embedding in the case $\dim(E^{cs}) = 2$. The inverse map of the DA we constructed has a mostly expanding center  (\cite{AnderssonVasquez2018}). Using a similar approach, we can also construct a (nontrivial) mixed center (\cite{MiCaoYang2017, MiCao2021}). 
\end{abstract}

\section{Introduction}

For modified mappings of hyperbolic linear automorphisms on $\mathbb{T}^n$, we refer to them as DA (derived from Anosov). The first DA on $\mathbb{T}^2$ was introduced by Smale \cite{Smale1967} in 1967, while the DA on $\mathbb{T}^3$ was first studied by Mañé \cite{Mane1978} in 1978. Katok and Hasselblatt \cite{KatokHasselblatt1995} clearly constructed Smale's DA on $\mathbb{T}^2$ in 1995. 
Thus, DAs have provided a rich source of ideas for constructing examples.
In the setting of mostly contracting centers, Bonatti and Viana \cite{BonattiViana2000} constructed a DA on $\mathbb{T}^3$ that controls the central Lyapunov exponents in 2000.  They controlled the central Lyapunov exponents by using the classical Stirling’s formula.
Using techniques similar to those of Bonatti and Viana, Andersson and Vásquez \cite{AnderssonVasquez2018} constructed a DA on $\mathbb{T}^3$ whose center direction is mostly expanding.
Viana and Yang \cite{VianaYang2017} studied the maximum entropy measure for the DA on $\mathbb{T}^3$ in 2017. 
Ures, Viana, F. Yang, and J. Yang studied the maximal $u$-entropy measure of DA on $\mathbb{T}^3$ in \cite{UresVianaYang2024}. 

The construction of DA plays an indispensable role in providing non-trivial examples for theoretical exploration.
In this paper, we show that a DA on $\mathbb{T}^3$ possesses the property of partial volume expansion. 
The inverse of the DA we construct admits a mostly expanding center. Although a DA on $\mathbb{T}^3$ with a mostly expanding center was already provided by Andersson and Vásquez, our approach remains genuinely novel. Furthermore, our method can be generalized to control the (modified) central Lyapunov exponents of arbitrary DAs on $\mathbb{T}^n$, even allowing for modifications performed on several pairwise disjoint small neighborhoods.  For example, using this technique, we constructed a mixed DA on $\mathbb{T}^3$, which is the primary subject of study in the papers \cite{MiCaoYang2017, MiCao2021}. 

Mañé’s DA diffeomorphisms \cite{Mane1978} are topologically mixing and mostly expanding \cite{AnderssonVasquez2018}, whereas Smale’s DA diffeomorphisms \cite{Smale1967,KatokHasselblatt1995} is a non-transitive  and are mostly contracting \cite{GanLiVianaYang2021}. The non-transitive mixed DA systems on $\mathbb{T}^3$ considered in this paper may provide inspiration for future investigations into more complex dynamical systems. 
By the way, it is worth mentioning that, so far, this is the first example of a mixed non-trivial center in three dimensions (For details, see Theorem~\ref{main3}). Therefore,  our method has broad applications and can provide nontrivial examples that belong to the settings of \cite{CaoMiFiniteness, David, MiCaoYang2017, CMY, HuaYangYang2020}.

\section{Basic Definitions and Results}

In this paper, a diffeomorphism \( f : M \to M \) on a smooth Riemannian manifold \( M \) is {\it partially hyperbolic} if there exists a continuous, \( Df \)-invariant splitting of the tangent bundle \( TM = E^{uu}\oplus_{\succ}E^{cs}  \), along with constants \( c > 0 \) and \( \sigma > 1 \), such that:
\begin{itemize}
    \item For all \( v^u \in E^{uu} \) and \( n \geq 1 \),
    \[
    \|Df^n v^u\| \geq c \sigma^n \|v^u\|.
    \]

    \item For all unit vectors \( v^u \in E^{uu} \) and \( v^{cs} \in E^{cs} \), and for \( n \geq 1 \),
    \[
    \frac{\|Df^n v^u\|}{\|Df^n v^{cs}\|} \geq c \sigma^n.
    \]
\end{itemize}
Here, \( E^{uu} \) is the {\it unstable bundle}, characterized by uniform expansion, while \( E^{cs} \), the {\it center-stable bundle}, is dominated by \( E^{uu} \). ($E_1 \oplus_{\succ} E_2$ means that $E_2$ is dominated by $E_1$.)  We call $\dim(E^{cs})$ the {\it $u$-codimension}.  A partially hyperbolic diffeomorphism \( f \) is \textit{partially volume-expanding} if
\[
|\det Df(x) |_H| > 1
\]
for any codimension-one subspace \( H \) of \( T_xM \) that contains \( E^{uu}_x \).

A probability measure $\mu$ is called a {\it Gibbs $u$-state} of $f$ if it is invariant and its conditional measures along the strong-unstable leaves of $f$ are absolutely continuous with respect to the Lebesgue measure on those leaves.
Given a $Df$-invariant subbundle $E$, we say that $E$ is {\it mostly expanding} (respectively, {\it mostly contracting)} if every Gibbs $u$-state has only positive (respectively, negative) Lyapunov exponents along $E$.

We obtain the following result.

\begin{thm}\label{main}
There exists a $C^\infty$ partially hyperbolic diffeomorphism $f$ on $\mathbb{T}^3$ with  a partially hyperbolic splitting
\[
T\mathbb{T}^3 = E^{uu}_f \oplus_{\succ} E^{c}_f \oplus_{\succ} F^{ss}_f,
\] such that $f$ is partially volume-expanding and  has a hyperbolic fixed point with unstable index 2. Moreover, there exists a $C^1$-neighborhood $\mathcal{U}_f$ of $f$ such that every diffeomorphism $g \in \mathcal{U}_f$ is also partially volume-expanding. 
\end{thm}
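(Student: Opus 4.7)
The plan is to construct $f$ via a DA (derived from Anosov) procedure starting from a carefully chosen linear Anosov automorphism $A\colon \T^3 \to \T^3$. I would pick $A \in \mathrm{SL}_3(\Z)$ with three distinct positive real eigenvalues $0 < \lambda_s < \lambda_c < 1 < \lambda_u$ satisfying $\lambda_u \lambda_s > 1$; such matrices exist, for instance as companion matrices of suitable integer polynomials. Then $A$ is already partially hyperbolic with the three-way splitting $E^{uu}_A \oplus_{\succ} E^c_A \oplus_{\succ} F^{ss}_A$ into eigendirections, and the inequality $\lambda_u \lambda_s > 1$ directly gives partial volume expansion for $A$.

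Next, fix a fixed point $p$ of $A$ and nested neighborhoods $U' \subset U$ of $p$. In local coordinates $(x^u, x^c, x^s)$ aligned with the $A$-eigenspaces at $p$, set $f \equiv A$ outside $U$ and, inside $U$,
\be
f(x^u, x^c, x^s) = \bigl(\lambda_u x^u,\; \lambda_c x^c + \chi(x^u, x^c, x^s)\, h(x^c),\; \lambda_s x^s\bigr),
\ee
where $\chi \ge 0$ is a smooth bump with $\chi \equiv 1$ on $U'$ and $\supp\chi \subset U$, and $h\colon \R \to \R$ is an odd smooth function with $h'(0) > 1 - \lambda_c$ and small overall $C^1$-norm. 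Since $h(0)=0$, a direct computation gives $Df(p) = \mathrm{diag}(\lambda_u, \lambda_c + h'(0), \lambda_s)$, so $p$ is a hyperbolic fixed point of $f$ of unstable index $2$, as required.

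To verify three-way partial hyperbolicity I would use the standard cone-field argument: $DA$ preserves pairwise disjoint cones around $E^{uu}_A$, $E^c_A$, and $F^{ss}_A$, with expansion and contraction rates determined by the spectral gaps $\lambda_u/\lambda_c$ and $\lambda_c/\lambda_s$. If $\chi$ and $h$ are chosen so that the off-diagonal entries of $Df - DA$ stay small relative to these gaps, then $Df$ preserves the same cones and the required dominated splitting $E^{uu}_f \oplus_{\succ} E^c_f \oplus_{\succ} F^{ss}_f$ exists.

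For partial volume expansion, any codimension-one subspace $H \subset T_x \T^3$ containing $E^{uu}_{f,x}$ is spanned by $E^{uu}_{f,x}$ and a unit vector $w \in E^{cs}_{f,x}$, so $|\det Df(x)|_H|$ is the Jacobian of $Df$ on this $2$-plane. Outside $U$, where $f = A$, this Jacobian lies in $[\lambda_u \lambda_s, \lambda_u \lambda_c] \subset (1, \infty)$. Inside $U$, the perturbation only enhances the center expansion; $\|Df|_{E^{uu}_f}\|$ stays close to $\lambda_u$ and the conorm of $Df|_{E^{cs}_f}$ close to $\lambda_s$, so $|\det Df(x)|_H|$ remains uniformly above $1$. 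By compactness of $\T^3$ and of the Grassmannian of $2$-planes, the infimum is strictly greater than $1$, and since both $E^{uu}_g$ and $Dg$ depend continuously on $g$ in the $C^1$ topology, the strict inequality persists on a $C^1$-neighborhood $\mathcal{U}_f$ of $f$. The main obstacle is the simultaneous tuning of $\chi$ and $h$: $h'(0)$ must be large enough to push the center eigenvalue past $1$, while the full perturbation must remain small enough in $C^1$ to preserve both the finer domination $E^c_f \succ F^{ss}_f$ and the determinantal inequality throughout $\T^3$.
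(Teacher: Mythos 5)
Your overall strategy coincides with the paper's: start from a linear hyperbolic automorphism with eigenvalues $0<\lambda_s<\lambda_c<1<\lambda_u$ (so $\lambda_u\lambda_s = 1/\lambda_c > 1$ already gives partial volume expansion for the linear map), modify it near a fixed point $p$ along the center direction to push the local index up to $2$, and argue that partial volume expansion survives. The gap is that you never reconcile the two demands you name in your last sentence. They are in genuine conflict in the form you state them: to change the index at $p$ you need $\|Df(p) - DA(p)\|\ge h'(0) > 1-\lambda_c$, and if that quantity is small enough to be ``$C^1$-small'' you must take $\lambda_c$ close to $1$ --- but then the volume-expansion margin $\lambda_u\lambda_s=1/\lambda_c$ collapses to $1$, and your final continuity/compactness step has nothing to work with. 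Moreover, a $C^1$-small perturbation of an Anosov map is still Anosov, so the fixed-point index cannot change at all in that regime. The paper resolves exactly this tension with two devices you do not use: it replaces $D$ by $A=D^{2n}$, $n$ large, so the linear margin $\lambda_{uu}\lambda_{ss}=1/\lambda_s$ becomes large rather than marginal; and it uses a bump $\psi(kb)$ supported in a slab that is thin only in the \emph{center} direction, which keeps the center derivative $P_b$ pinned between $1/2$ and $1/(2\lambda_{ss})$ (a non-small modification) while forcing the cross-derivatives $P_a,\,P_c$ to zero as $k\to\infty$, because they carry an explicit factor of $b$ and are supported where $|b|\le\delta/k$. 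Your ansatz $\chi(x^u,x^c,x^s)\,h(x^c)$ with a generic bump $\chi$ produces off-diagonal entries $\chi_s h$ and $\chi_u h$ with no such decay mechanism; making $h$ small in $C^0$ to kill them while keeping $h'(0)$ large is precisely the $\psi(kb)$ trick, and without it your perturbation tilts $E^{uu}_f$ by an uncontrolled amount.

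The second gap is the bare assertion that inside $U$ the quantity $|\det Df(x)|_H|$ ``remains uniformly above $1$.'' That is where the paper's technical work actually lives. The dangerous plane $H$ is close to $E^{uu}_f\oplus E^{ss}_f$, where the linear Jacobian is the minimal value $\approx\lambda_u\lambda_s$, and both the tilt of $E^{uu}_f$ away from $E^{uu}_A$ and the new coupling between the $E^{cs}$ coordinates push the Jacobian down. The paper proves Lemmas~\ref{paowu} and \ref{delta} and Proposition~\ref{volume}, estimating $\det(Df_k|_V)^2=\|Df_k v^{uu}\|^2\,\|Df_k v\|^2-\langle Df_k v^{uu},Df_k v\rangle^2$ directly and splitting into the regimes $|c|\le\tfrac{1}{100}$ and $|c|\ge\tfrac{1}{100}$ according to the strong-stable component $c$ of the orthogonal generator of $H$; closing the estimate needs both the amplified gap from $D^{2n}$ and the $k\to\infty$ decay of $P_a,P_c$. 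Some analogue of this computation is unavoidable. A soft argument from continuity of $E^{uu}_g$ and $Dg$ controls $C^1$-open robustness once $f$ is in hand, but it cannot bridge the step from $A$ to $f$, because that step is not a small one.
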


Furthermore, we establish the following result.

\begin{thm}\label{main2}
Let $f$ be as in Theorem~\ref{main}. Then the inverse map $f^{-1}$ has a mostly expanding center, that is, $E^c_{f^{-1}}$ ($=E^{c}_f$) is mostly expanding. Moreover, there exists a $C^1$-neighborhood $\mathcal{U}_{f^{-1}}$ of $f^{-1}$ such that every $C^{1+}$-diffeomorphism $h \in \mathcal{U}_{f^{-1}}$ has a mostly expanding center.
\end{thm}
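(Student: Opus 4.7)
\medskip
\noindent\textbf{Proof proposal.} The plan is to leverage the explicit DA construction of $f$ given by Theorem~\ref{main} and adapt the Stirling-type estimate of Bonatti--Viana \cite{BonattiViana2000} and Andersson--V\'asquez \cite{AnderssonVasquez2018} to the inverse dynamics. Under inversion, the dominated splitting of $f^{-1}$ is
\[
T\mathbb{T}^3 \;=\; F^{ss}_f \oplus_{\succ} E^c_f \oplus_{\succ} E^{uu}_f,
\]
so a Gibbs $u$-state of $f^{-1}$ is an $f^{-1}$-invariant probability $\nu$ with absolutely continuous disintegration along the leaves of the $f$-strong-stable foliation. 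It suffices to show that
\[
\lambda^{c}_{f^{-1}}(\nu)\;=\;-\int \log \|Df|_{E^c_f}\|\, d\nu\;>\;0
\]
for every such $\nu$.

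By the construction used to prove Theorem~\ref{main}, there is a small neighborhood $U$ of the distinguished fixed point outside of which $f$ coincides with a fixed linear Anosov automorphism $A$ whose central eigenvalue $\lambda_c\in(0,1)$ is known, while inside $U$ one has a uniform upper bound $\|Df|_{E^c_f}\|\le M$, with $M>1$ because the center direction must expand at the fixed point to produce unstable index $2$. Consequently,
\[
\lambda^{c}_{f^{-1}}(\nu) \;\ge\; -\log\lambda_c \;-\; \log(M/\lambda_c)\,\nu(U),
\]
so it is enough to establish a uniform bound $\nu(U) < \alpha := \log(1/\lambda_c)/\log(M/\lambda_c)$, valid for every Gibbs $u$-state $\nu$ of $f^{-1}$.

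The main obstacle is this bound on $\nu(U)$. My plan is to write $\nu$ as a weak-$\ast$ accumulation point of Cesaro averages of $(f^{-1})^k_\ast \mathrm{Leb}_\gamma$ over local strong unstable plaques $\gamma$ of $f^{-1}$ (i.e., plaques of $F^{ss}_f$), and to control the Lebesgue fraction of $(f^{-1})^k(\gamma)$ that lies in $U$. The partial volume expansion of Theorem~\ref{main} furnishes exactly the distortion control on the two-plane $E^{uu}_f\oplus E^c_f$ needed to track how these plaques spread under iteration of $f^{-1}$. Classifying the points of $(f^{-1})^k(\gamma)$ by their combinatorial itinerary relative to $U$ and applying a Stirling-type counting argument in the spirit of \cite{BonattiViana2000, AnderssonVasquez2018} then yields a uniform bound on the corresponding Lebesgue fraction; passing to the Cesaro limit delivers $\nu(U)<\alpha$. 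This step is the most sensitive to the geometry of the DA modification and to the fact that the strong unstable foliation of $f^{-1}$ is the strong stable of $f$.

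For the $C^1$-open part, each ingredient of the argument is robust under small $C^1$-perturbations of $f^{-1}$: the dominated splitting persists and varies continuously, the fixed point and region $U$ move continuously, the constants $\lambda_c$ and $M$ can be chosen uniform on a small neighborhood, and partial volume expansion is $C^1$-open by Theorem~\ref{main} (applied to $h^{-1}\in\mathcal{U}_f$). The $C^{1+}$ hypothesis enters only through the standard Pesin-type theory of Gibbs $u$-states (absolute continuity of strong-unstable holonomies and weak-$\ast$ compactness of the class of Gibbs $u$-states). The same Stirling estimate then applies uniformly to every $h \in \mathcal{U}_{f^{-1}}$ of class $C^{1+}$, yielding mostly expanding center and completing the proof.
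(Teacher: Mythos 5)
You have correctly identified the overall skeleton that the paper uses: pass to an ergodic Gibbs $u$-state $\mu$ of $g=f^{-1}$, represent it as a Cesaro limit of pushforwards $g^n_* m_L$ of Lebesgue on a disk $L$ inside a $g$-strong-unstable leaf, reduce the problem to a uniform upper bound on $\mu(\Lambda(p))$, and close with a two-constant estimate (since $\det(Dg|_{F^c})$ equals $\gamma_u=1/\lambda_s>1$ off $\Lambda(p)$ and is bounded below by $\tfrac12$ on $\Lambda(p)$). However, your proposal takes a genuinely different route at the central step and, as written, leaves a gap there. You plan to control the Lebesgue fraction of $g^n(L)$ inside the modified region by ``classifying points by combinatorial itinerary'' and a Stirling-type count after Bonatti--Viana and Andersson--V\'asquez. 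That is precisely the method from which the paper explicitly distinguishes itself: the paper's estimate is purely geometric, with no itinerary counting at all. By construction $E^{uu}_g$ sits in a thin $\kappa$-cone around $E^{ss}_A=E^{ss}_D$, and relation~(\ref{slope}) fixes $\delta$ so that every $\mathcal{F}^{ss}$-arc of length $\tfrac14$ meets $\Lambda(p)$ in a set of length at most $\tfrac{1}{200}$; decomposing $g^n(L)$ into finitely many pieces that each project onto a full such arc under $\pi^x_{uu}$ gives, directly and for every $n$, the bound $m_{g^n(L)}(\Lambda(p))/m_{g^n(L)}(g^n(L))\lesssim\tfrac{1}{100}$. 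Coupled with the elementary Jacobian estimate~(\ref{contro}) (cone invariance plus the $\gamma_{uu}$ entry in the upper-left corner of $Dg$), this yields $\mu(\Lambda(p))\le\frac{(1+\kappa^2)^{3/2}}{100}+\frac{1}{100}$ without any distortion lemma. Your proposal never makes the counting argument concrete and, more importantly, never explains why the achievable bound on $\nu(U)$ would fall below your threshold $\alpha=\log(1/\lambda_c)/\log(M/\lambda_c)$; in the paper, this is arranged in advance by first fixing $\delta$ and $\psi$ (hence the $\tfrac{1}{100}$ geometric bound) and only then choosing the iterate $A=D^{2n}$ so that relation~(\ref{kappa}) holds. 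The order of quantification of these constants is essential, and your sketch does not address it.

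A second, smaller but genuine misstep is the appeal to partial volume expansion from Theorem~\ref{main} as a ``distortion control on the two-plane $E^{uu}_f\oplus E^c_f$.'' The Gibbs $u$-states of $g$ live on conditionals along the one-dimensional foliation tangent to $E^{ss}_f$, so the relevant Jacobian is one-dimensional and is handled by relation~(\ref{contro}) alone; no two-plane volume bound enters. Moreover, Theorem~\ref{main} bounds $|\det Df|_H|$ for codimension-one $H\supset E^{uu}_f$, which says nothing directly useful about how $g$ spreads $E^{ss}_f$-plaques. The paper's proof of Theorem~\ref{main2} does not use partial volume expansion at all; it only uses~(\ref{slope}),~(\ref{contro}), and~(\ref{kappa}), which is also what makes the $C^1$-robustness of the conclusion transparent.
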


To illustrate the broad applicability of our techniques, we present following Theorem~\ref{main3}.

\begin{thm}\label{main3}
There exists a $C^\infty$ partially hyperbolic diffeomorphism $G$ on $\mathbb{T}^3$ with a partially hyperbolic splitting
\[
T\mathbb{T}^3 = F^{uu}_G \oplus_{\succ} F^{cu}_G \oplus_{\succ} F^{cs}_G,
\]
such that $\dim(F^{uu}_G) =\dim(F^{cu}_G) = \dim(F^{cs}_G) = 1$, where $F^{cu}_G$ is mostly expanding (but not uniformly expanding) and $F^{cs}_G$ is mostly contracting (but not uniformly contracting). Moreover, all the above properties are $C^{1+}$-robust.
\end{thm}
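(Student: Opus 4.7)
The plan is to start with a hyperbolic linear automorphism $A$ on $\mathbb{T}^3$ with three distinct positive real eigenvalues $\lambda_1 > \lambda_2 > 1 > \lambda_3 > 0$, so that $A$ already carries the three-way dominated splitting $E^{uu}_A \oplus E^c_A \oplus E^s_A$ with one-dimensional subbundles. I will then perform two independent DA-type modifications supported on pairwise disjoint neighborhoods, one producing the mostly expanding but not uniformly expanding center-unstable, and the other producing the mostly contracting but not uniformly contracting center-stable.

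More precisely, I choose two distinct fixed points $p_1 \neq p_2$ of $A$ together with disjoint small balls $U_1 \ni p_1$ and $U_2 \ni p_2$. Inside $U_1$ I perform a Mañé-type rotation along $E^c_A$ that turns $p_1$ into a hyperbolic fixed point of unstable index one, so that the central direction becomes contracting at $p_1$ while remaining expanding outside $U_1$. Inside $U_2$ I perform an analogous rotation along $E^s_A$ that turns $p_2$ into a source, so that the formerly stable direction becomes expanding inside $U_2$ and stays contracting outside. Both modifications are carried out so as to preserve invariant cone fields around $F^{uu}_G$ and around $F^{uu}_G \oplus F^{cu}_G$, in the spirit of Mañé and Bonatti--Viana; this yields the dominated splitting $F^{uu}_G \oplus_{\succ} F^{cu}_G \oplus_{\succ} F^{cs}_G$, and $C^\infty$ smoothness is automatic since the modifications are explicit smooth isotopies supported in $U_1 \cup U_2$.

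The heart of the proof is controlling the central Lyapunov exponents of Gibbs $u$-states, which here are the invariant measures with absolutely continuous conditionals along the one-dimensional strong unstable leaves of $F^{uu}_G$. For $F^{cu}_G$ I run the argument of Theorem~\ref{main2}, applying the Stirling-formula combinatorial bookkeeping of Bonatti--Viana and Andersson--Vásquez along $G$-orbits of $F^{uu}_G$-unstable discs to show $\int \log \|DG|_{F^{cu}_G}\|\,d\mu > 0$ for every Gibbs $u$-state $\mu$, provided the rotation angle and the diameter of $U_1$ are small enough. For $F^{cs}_G$ I apply the mirror estimate to bound $\int \log \|DG|_{F^{cs}_G}\|\,d\mu$ from above by a strictly negative number, exploiting that outside $U_2$ the integrand is close to $\log \lambda_3 < 0$ while the expected proportion of $G$-iterates of a typical unstable disc that fall inside $U_2$ is made arbitrarily small by the same combinatorial estimate. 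The existence of $p_1$ (resp.~$p_2$) as a hyperbolic periodic orbit with negative $F^{cu}_G$-exponent (resp.~positive $F^{cs}_G$-exponent) immediately rules out uniform expansion of $F^{cu}_G$ (resp.~uniform contraction of $F^{cs}_G$).

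The main obstacle is making the two modifications compatible: the perturbation at $p_2$ must not destroy the domination $F^{cu}_G \succ F^{cs}_G$ required for the second Lyapunov estimate, and the quantitative Bonatti--Viana bounds at $p_1$ and $p_2$ must hold simultaneously. I handle this by first fixing sufficiently small rotation angles and neighborhood sizes so that the three-way dominated splitting is preserved, and only then running the integral estimates, each of which varies continuously in these parameters. The $C^{1+}$-robustness of the two center properties in a $C^1$-neighborhood of $G$ then follows from the standard semicontinuity properties of Gibbs $u$-states together with the persistence of the dominated splitting, as in Andersson--Vásquez.
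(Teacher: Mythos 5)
Your overall scaffolding matches the paper: start from a hyperbolic linear automorphism with a three-way splitting, perform two independent DA-type modifications supported in disjoint neighborhoods of two fixed points, and then bound the centre Lyapunov exponents of Gibbs $u$-states by controlling how much mass such measures can put on the modified regions. That is exactly the paper's plan. However, the core quantitative step is different: you propose to invoke the Stirling-formula combinatorial bookkeeping of Bonatti--Viana and Andersson--V\'asquez, but the paper explicitly avoids this and instead develops a direct geometric criterion (Subsection~\ref{criter}). The paper chooses the modified boxes so that a definite fraction (at most $\tfrac{1}{100}$) of any strong-unstable segment of unit scale meets each box (relation~\eqref{slope}), and then uses the near-isometry of $Dg$ along $E^{uu}_g$ (relation~\eqref{contro}) to push forward Lebesgue on an unstable disc and directly bound $\mu(\Lambda(q_i))\le \frac{(1+\kappa_2^2)^{3/2}}{100}+\frac{1}{100}$ for every Gibbs $u$-state. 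This buys you a much simpler and uniform estimate that handles both modified boxes at once, whereas the Stirling route would have to be re-derived from scratch for the mixed case; the paper emphasizes precisely this novelty. You also misattribute the Stirling argument to Theorem~\ref{main2} of the paper, which in fact uses the same geometric criterion, not Stirling.

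There is also a genuine gap in your construction at $p_2$. You describe the second modification as a forward ``Mañé-type rotation along $E^s_A$'' turning $p_2$ into a source. The paper makes the point (end of Section~\ref{ccc}) that attempting to weaken the strong contraction $\lambda_{ss}$ in the forward direction of the map \emph{fails to be a diffeomorphism} once $\lambda_{ss}$ is chosen small enough for the mostly-contracting integral estimate (relation~\eqref{kappa33}). Concretely, the bump-function derivative $\bigl(kc\,\phi'(kc)+\phi(kc)\bigr)\phi(r)$ drops as low as $-m$, and the positivity condition $\frac{\partial R_2}{\partial c}>0$ forces $\frac{1}{1-2/\lambda_{ss}}<-m$, which eventually contradicts the smallness of $\lambda_{ss}$. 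The paper resolves this by defining the modification at $q_2$ through its inverse, i.e.\ $J_k^{-1}(a,b,c)=\bigl(a,\,b,\,\lambda_{ss}Q_2(a,b,c)\bigr)$, so that monotonicity is automatic. Your sketch does not address this obstruction; as written, the parameter window in which your construction is both a diffeomorphism and satisfies your integral estimate may be empty. Separately, ``rotation along $E^c_A$'' should be read loosely, since a genuine rotation would couple eigendirections and complicate the cone-field/domination argument; the paper's modification is a shear along a single eigendirection, which is what makes the domination analysis tractable.
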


The proof of Theorem~\ref{main} is embedded in subsection~\ref{volum} of Section ~\ref{aaa}. The proof of Theorem~\ref{main2} can be found in Section~\ref{bbb}, "Mostly Expanding Center." The proof of Theorem~\ref{main3} is provided in Section~\ref{ccc}, "Application: The Mixed Center Case."

\section{Construction of $f$}\label{aaa}

\subsection{Basic Setup}\label{setup}

Let
$$
D:\mathbb{T}^3 \to \mathbb{T}^3
$$
be the hyperbolic automorphism induced by
$$
\begin{pmatrix}  
2 & 1 & 1 \\  
1 & 1 & 1 \\  
1 & 1 & 0  
\end{pmatrix}.  
$$
We observe that $D$ admits a partially hyperbolic splitting
$$
T\mathbb{T}^3 = E^{uu} \oplus E^{s} \oplus E^{ss}.
$$
The foliations tangent to $E^{ss}$, $E^{s}$, and $E^{uu}$ are denoted by
$\mathcal{F}^{ss}(D)$, $\mathcal{F}^{s}(D)$, and $\mathcal{F}^{uu}(D)$, respectively. We equip $\mathbb{T}^3 = \mathbb{R}^3 / \mathbb{Z}^3$ with the Riemannian metric induced by the Euclidean metric on $\mathbb{R}^3$.  All determinants and curve lengths are computed with respect to this metric. 
There exists a fixed point $p$ and a sufficiently small $\delta > 0$ such that, for every $x \in \mathbb{T}^3$, the length of
\begin{equation}\label{slope}
\mathcal{F}^{ss}_{\tfrac{1}{4}}(x, D) \;\cap\; 
\big( \mathcal{F}^{uu}_{2\delta}(p, D) \times \mathcal{F}^{s}_{2\delta}(p, D) \times \mathcal{F}^{ss}_{2\delta}(p, D) \big)
\end{equation}
is at most $\tfrac{1}{200}$. (\textbf{Explanation:} We can view this in the lifted space with an isometry and point out that the quantity $\frac{1}{4}$ is not essential. What is crucial, however, is that the proportion   
$$
\frac{\operatorname{length}\left( \mathcal{F}^{ss}_{\sigma}(x, D) \cap \left( \mathcal{F}^{uu}_{2\delta}(p, D) \times \mathcal{F}^{s}_{2\delta}(p, D) \times \mathcal{F}^{ss}_{2\delta}(p, D) \right) \right)}{\operatorname{length}\left( \mathcal{F}^{ss}_{\sigma}(x, D) \right)}
$$ can be controlled by $\frac{1}{100}$ for some $\sigma,\delta>0$). We fix $\delta$ with this property.  {\color{black}Choose a $C^\infty$-smooth function $\psi \colon \mathbb{R} \to \mathbb{R}$ such that:
\begin{itemize}
\item $\psi(x) = \psi(-x)$ for all $x \in \mathbb{R}$ (i.e., $\psi$ is symmetric about $x = 0$),
\item $\psi(x)$ is strictly monotone on $\left( \frac{\delta}{2}, \delta \right)$,
\item $\psi(x) = 1$ for $x \in \left[ 0, \frac{\delta}{2} \right]$, and $\psi(x) = 0$ for $x \in [\delta, +\infty)$.
\end{itemize}
Once $\psi$ is fixed, it is non-zero only on a bounded closed set, and we have $x \, \psi'(x) \leq 0$.  Hence, there exists a constant $m > 0$ such that
\begin{equation}\label{budengshi1}
-m\le\bigl(x\, \psi'(x) + \psi(x)\bigr) \psi(y) \le 1 \quad \text{for all } x, y.
\end{equation}}

\subsection{Construction}

\begin{lem}\label{paowu}
There exist constants $\varepsilon_0 > 0$ and $M > 0$ such that
$$
\frac{c^2 + cu}{(1+\epsilon^2 + c^2)(1+\epsilon^2)} \ge M, \quad \text{for all } |u| \le \varepsilon_0, \ |\epsilon| \le \varepsilon_0, \text{ and } |c|\ge\frac{1}{100}.
$$
\end{lem}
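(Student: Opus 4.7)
The plan is to reduce the bound to a monotonicity statement in the single variable $c^2$. The key observation is that if $\varepsilon_0$ is chosen small enough relative to the fixed lower bound $|c|\ge\tfrac{1}{100}$ to guarantee $|u|\le|c|/2$, then the numerator factors as $c^2+cu=c(c+u)$, where $c$ and $c+u$ have the same sign and $|c+u|\ge|c|/2$, whence $c^2+cu\ge c^2/2$. For the denominator I would use the trivial monotonicity bound $(1+\epsilon^2+c^2)(1+\epsilon^2)\le(1+\varepsilon_0^2+c^2)(1+\varepsilon_0^2)$.

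Concretely, I would take $\varepsilon_0:=\tfrac{1}{200}$. Combining the two estimates gives
$$\frac{c^2+cu}{(1+\epsilon^2+c^2)(1+\epsilon^2)}\;\ge\;\frac{1}{2(1+\varepsilon_0^2)}\cdot\frac{c^2}{1+\varepsilon_0^2+c^2}.$$
The map $t\mapsto t/(1+\varepsilon_0^2+t)$ is strictly increasing on $[0,\infty)$, so its infimum over $\{c^2:|c|\ge\tfrac{1}{100}\}=[10^{-4},\infty)$ is attained at $c^2=10^{-4}$. Setting
$$M\;:=\;\frac{1}{2(1+\varepsilon_0^2)}\cdot\frac{10^{-4}}{1+\varepsilon_0^2+10^{-4}}\;>\;0$$
produces an explicit admissible constant.

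The only mild subtlety is that $c$ is unrestricted from above, so one must verify that the ratio does not degenerate as $|c|\to\infty$. In fact this is the easy regime: the ratio tends to $1/(1+\epsilon^2)\ge 1/(1+\varepsilon_0^2)$, much larger than the value at the lower boundary $|c|=\tfrac{1}{100}$. Thus the worst case is genuinely at $|c|=\tfrac{1}{100}$, and the monotonicity reduction collapses the whole statement to a one-line calculation; I do not anticipate a serious obstacle, the only care being to fix $\varepsilon_0$ strictly smaller than the lower bound on $|c|$ so that positivity of the numerator is preserved.
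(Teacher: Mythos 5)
Your proof is correct and follows essentially the same route as the paper: bound the numerator below using $|u|$ small relative to $|c|$, bound the denominator above using $|\epsilon|\le\varepsilon_0$, and then observe that the resulting one-variable expression is monotone in $c^2$ so the infimum sits at the boundary $|c|=\tfrac{1}{100}$. The paper's version first divides through by $c^2$ and then bounds each factor, but this is only a cosmetic reorganization of the same estimate.
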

The proof of this lemma is straightforward, and for completeness, we will leave the details to the appendix.

Let $A= D^{2n}$ be a hyperbolic linear automorphism on $\mathbb{T}^3$  for sufficiently large $n$, so that 
\begin{itemize}
    \item \( A \) has eigenvalues \( 0<\lambda_{ss} < \lambda_{s} < 1 <2<  \lambda_{uu}\)  such that
\begin{equation}\label{budengshi2}
\lambda_{ss} \cdot \lambda_s \cdot \lambda_{uu} = 1, \qquad \frac{M}{\lambda_s^2}>2, \qquad 
-\,m\Bigl(\frac{1}{2} - \frac{1}{\lambda_s}\Bigr) + \frac{1}{\lambda_s} \le \frac{1}{2 \lambda_{ss}},
\end{equation}
where $M$ is as in Lemma~\ref{paowu}. There exists a small constant $\kappa > 0$ such that
\begin{equation}\label{kappa}
(\frac{(1+\kappa^2)^{\frac{3}{2}}}{100}+\frac{1}{100})\log(\frac{1}{2})+(\frac{99}{100}-\frac{(1+\kappa^2)^{\frac{3}{2}}}{100})\log(\frac{1}{\lambda_s})>0.
\end{equation}
(\textbf{Explanation:} For any fixed $\lambda_s$ such that
$$
\left( \frac{1}{100} + \frac{1}{100} \right) \log\left(\frac{1}{2}\right) + \left( \frac{99}{100} - \frac{1}{100} \right) \log\left( \frac{1}{\lambda_s} \right) > 0.
$$
Since the function $\kappa \mapsto \left( \frac{(1 + \kappa^2)^{\frac{3}{2}}}{100} + \frac{1}{100} \right) \log\left( \frac{1}{2} \right) + \left( \frac{99}{100} - \frac{(1 + \kappa^2)^{\frac{3}{2}}}{100} \right) \log\left( \frac{1}{\lambda_s} \right)$ is continuous, relation~\ref{kappa} is always achievable.)
    \item The eigenvalues $\lambda_{ss}, \lambda_{s}, \lambda_{uu}$ correspond to mutually orthogonal eigenspaces $E^{ss}, E^{s}, E^{uu}$. The foliations that are tangent to these eigenspaces everywhere are denoted by \( \mathcal{F}^{ss}(A), \mathcal{F}^s(A), \mathcal{F}^{uu}(A) \), respectively.
    \item \( A \) has a fixed points, \( p \) with a open neighborhoods $U_p$ such that $\Lambda(p)$ is properly contained in $U_p$, where 
\[
\Lambda(p)=\Int(\F^{uu}_{2\delta}(p)\times \F^{s}_{2\delta}(p)\times \F^{ss}_{2\delta}(p)),\qquad  U_p=\Int(\F^{uu}_{4\delta}(p)\times \F^{s}_{4\delta}(p)\times \F^{ss}_{4\delta}(p)).
\]
\end{itemize}
 At this point, we have
$$
\mathcal{F}^{*}(A) = \mathcal{F}^{*}(D), \quad \text{for each } * \in \{uu, s, ss\}.
$$

First, for each $k\in\mathbb{N}$, we define $I_k$ as follows:
\begin{itemize}
\item For $(a,b,c) \in U_p$, set
$$
I_k^{-1}(a,b,c) = \Bigl(a, \lambda_s\cdot P(a,b,c), c\Bigr),
$$
where
{\color{black}$$
P(a,b,c) = \psi(kb) \cdot \psi\bigl(\sqrt{a^2 + c^2}\bigr) \cdot (\frac{1}{2} - \frac{1}{\lambda_s})b + \frac{1}{\lambda_s} b.
$$}
\item For $(a,b,c) \notin U_p$, set $I_k^{-1} = I$, the identity map.
\end{itemize}

Thus, 

\begin{lem}\label{zhongxin}
The map $I_k$ satisfies the following properties:
$$
\frac{1}{2} \le \frac{\partial P}{\partial b} \le \frac{1}{2 \lambda_{ss}}, \quad I_k(\Lambda(p)) = \Lambda(p), \quad 
\lim_{k \to +\infty} \frac{\partial P}{\partial a} = 0, \quad
\lim_{k \to +\infty} \frac{\partial P}{\partial c} = 0.
$$
Moreover, $I_k$ is a $C^\infty$ diffeomorphism.
\end{lem}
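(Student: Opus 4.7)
The plan is to read each of the four properties directly off the explicit formula for $P$. First, I would compute $\partial P/\partial b$: the only $b$-dependent factor in the nonlinear piece is $\psi(kb)\cdot b$, whose derivative is $kb\,\psi'(kb)+\psi(kb)$, so together with the factor $\psi(\sqrt{a^2+c^2})$ this produces exactly the combination $(x\psi'(x)+\psi(x))\psi(y)$ bounded in \eqref{budengshi1}. Because $\lambda_s<1$, the coefficient $\tfrac12-\tfrac{1}{\lambda_s}$ is negative, so multiplying the interval $[-m,1]$ by this negative number reverses the inequalities and sandwiches $\partial P/\partial b$ between $\tfrac12$ (from the endpoint $1$) and $-m(\tfrac12-\tfrac{1}{\lambda_s})+\tfrac{1}{\lambda_s}$ (from the endpoint $-m$); the latter is at most $\tfrac{1}{2\lambda_{ss}}$ by the third inequality in \eqref{budengshi2}, which was arranged precisely for this purpose.

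For the invariance $I_k(\Lambda(p))=\Lambda(p)$, the key point is that the non-identity part of $I_k^{-1}$, namely $\lambda_s\psi(kb)\psi(\sqrt{a^2+c^2})(\tfrac12-\tfrac{1}{\lambda_s})b$, is supported in $\{|b|\le\delta/k\}\cap\{\sqrt{a^2+c^2}\le\delta\}$, which for any $k\ge 1$ lies strictly inside $\Lambda(p)$. In particular, on the faces $b=\pm 2\delta$ the map $I_k^{-1}$ coincides with the identity; combined with the strict monotonicity $\partial P/\partial b\ge\tfrac12>0$ just established, each vertical slice $b\mapsto\lambda_s P(a,b,c)$ is a monotone homeomorphism of $[-2\delta,2\delta]$ fixing the endpoints, so it preserves $\Lambda(p)$ fiberwise. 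The two remaining partial-derivative estimates come from the same localization: the chain rule gives $\partial P/\partial a=\psi(kb)\psi'(\sqrt{a^2+c^2})\tfrac{a}{\sqrt{a^2+c^2}}(\tfrac12-\tfrac{1}{\lambda_s})b$, and the factor $\psi(kb)$ forces $|b|\le\delta/k$, so $|\partial P/\partial a|\le\|\psi'\|_\infty|\tfrac12-\tfrac{1}{\lambda_s}|(\delta/k)\to 0$ as $k\to\infty$. The argument for $\partial P/\partial c$ is identical.

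Lastly, $I_k$ is $C^\infty$ because $\psi$ is $C^\infty$ and the perturbation is compactly supported strictly inside the interior of $U_p$, so the gluing with the identity outside $U_p$ is automatic. The Jacobian matrix of $I_k^{-1}$ is lower triangular with diagonal entries $(1,\lambda_s\,\partial P/\partial b,1)$, so its determinant is bounded below by $\lambda_s/2>0$; combined with the fiberwise bijectivity from the previous paragraph, this promotes $I_k$ to a global $C^\infty$ diffeomorphism. The only genuinely delicate step is the sign-tracking in the $\partial P/\partial b$ bound; after that, all four conclusions are bookkeeping once the support of the perturbation is identified.
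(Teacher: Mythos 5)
Your proof is correct and follows essentially the same route as the paper: read off $\partial P/\partial b$ and sandwich it via \eqref{budengshi1} and \eqref{budengshi2}, localize the perturbation to deduce the invariance of $\Lambda(p)$ and the vanishing of $\partial P/\partial a$, $\partial P/\partial c$ as $k\to\infty$, and invoke smoothness of $\psi$ for the diffeomorphism claim. One cosmetic slip: $DI_k^{-1}$ is not lower triangular (it has a nonzero $(2,3)$ entry $\lambda_s P_c$), though its determinant is indeed $\lambda_s\,\partial P/\partial b$ by cofactor expansion, so your lower bound still holds.
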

\begin{proof}
Let $r = \sqrt{a^2 + c^2}$. It then follows that
{\color{black}$$
\frac{\partial P}{\partial b} = (\frac{1}{2} - \frac{1}{\lambda_s})\, \psi(r) \bigl[ k b\, \psi'(k b) + \psi(k b) \bigr] + \frac{1}{\lambda_s}.
$$}
Since the map $X \mapsto \bigl(\frac{1}{2} - \frac{1}{\lambda_s}\bigr) X + \frac{1}{\lambda_s}$ is monotone decreasing, it follows from inequalities~\ref{budengshi1} and ~\ref{budengshi2} that
$$
\frac{1}{2} \le \frac{\partial P}{\partial b} \le \frac{1}{2 \lambda_{ss}}.
$$
For any $a \in \mathcal{F}^{uu}_{\delta}(p)$ and $c \in \mathcal{F}^{ss}_{\delta}(p)$, the map
{\color{black}$$
b \mapsto \lambda_s \cdot P(a,b,c) = \psi(kb) \cdot \psi\bigl(\sqrt{a^2 + c^2}\bigr) \cdot \Bigl(\frac{\lambda_s}{2} - 1\Bigr) b + b
$$}
is strictly increasing. Moreover,
$$
\lambda_s \cdot P(a, -\delta, c) = -\delta, \qquad \lambda_s \cdot P(a, \delta, c) = \delta.
$$
By connectedness, we have
$$
I_k^{-1}\bigl(\{a\} \times \mathcal{F}^{s}_{\delta}(p) \times \{c\}\bigr) = \{a\} \times \mathcal{F}^{s}_{\delta}(p) \times \{c\}.
$$
Then $I_k(\Lambda(p)) = \Lambda(p)$.
It is clear that $I_k^{-1} = I$ when $x \in U_p \setminus \Lambda(p)$ and $I_k=I$ when $x \in \T^3 \setminus \Lambda(p)$. 
{\color{black}It then follows from the smoothness of $\psi$ and \cite[Proposition~5.7]{Lee} that $I_k$ is a  $C^\infty$ diffeomorphism.} We can check that when $\frac{\partial P}{\partial a}$ and $\frac{\partial P}{\partial c}$ are both nonzero, we have
{\color{black}$$
\frac{\partial P}{\partial a} = \psi(kb) \cdot (\frac{1}{2} - \frac{1}{\lambda_s}) b \cdot \psi'(r) \cdot \frac{a}{r}, \qquad \frac{\partial P}{\partial c} = \psi(kb) \cdot (\frac{1}{2} - \frac{1}{\lambda_s}) b \cdot \psi'(r) \cdot \frac{c}{r}
$$
Since $\psi$ is nonzero only when $|kb| \le \delta$, it follows that $\psi$ is nonzero only when $|b| \le \frac{\delta}{k}$. Then, using the boundedness of $\psi(kb) \cdot (\frac{1}{2} - \frac{1}{\lambda_s})  \cdot \psi'(r)$, we obtain}
$$
\lim_{k \to +\infty} \frac{\partial P}{\partial a} = 0, \quad
\lim_{k \to +\infty} \frac{\partial P}{\partial c} = 0.
$$
\end{proof}

We now define $f_k$ by
{\color{black}$$
f_k := I_k\circ A.
$$}
Notice that 
$$
DI_k^{-1}= 
\begin{pmatrix}
1 & 0 & 0 \\[1mm]
\lambda_s \frac{\partial P}{\partial a} & \lambda_s \frac{\partial P}{\partial b} & \lambda_s \frac{\partial P}{\partial c} \\[1mm]
0 & 0 & 1
\end{pmatrix}:
E^{uu} \oplus E^s \oplus E^{ss} \to E^{uu} \oplus E^s \oplus E^{ss}.
 $$
Let $P_a = \frac{\partial P}{\partial a}, \, P_b = \frac{\partial P}{\partial b}, \, P_c = \frac{\partial P}{\partial c}$. It follows that:
{\color{black}$$
Df_k^{-1}|{\Lambda(p)} =  A^{-1}\circ DI_k^{-1}|{\Lambda(p)}  =
\begin{pmatrix}
\frac{1}{\lambda_{uu}} & 0 & 0 \\[2mm]
P_a & P_b & P_c \\[2mm]
0 & 0 & \frac{1}{\lambda_{ss}}
\end{pmatrix}:
E^{uu} \oplus E^s \oplus E^{ss} \to E^{uu} \oplus E^s \oplus E^{ss}.
$$
Thus, the tangent map of $f_k $ can be written as:
$$
Df_k|{f_k^{-1}(\Lambda(p))} =
\begin{pmatrix}
\lambda_{uu} & 0 & 0 \\[2mm]
-\frac{\lambda_{uu} P_a}{P_b} & \frac{1}{P_b} & -\frac{\lambda_{ss} P_c}{P_b} \\[2mm]
0 & 0 & \lambda_{ss}
\end{pmatrix}:
E^{uu} \oplus E^s \oplus E^{ss} \to E^{uu} \oplus E^s \oplus E^{ss}.
$$}

Notice that both $E^{uu}\oplus E^s$ and $E^{ss}\oplus E^s$ are invariant under $Df_k$.
Thus, we define  the unstable cone by
$$
\mathcal{C}_\alpha(E^{uu},E^s)
:=\bigl\{v=v^{uu}+v^{s}\;:\;v^{uu}\in E^{uu},\;v^{s}\in E^{s},\;\|v^{s}\|\le\alpha\|v^{uu}\|\bigr\}.
$$
Similarly, the stable cone is defined by
$$
\mathcal{C}_\alpha(E^{ss},E^s) 
:= \bigl\{ v = v^{ss} + v^{s} \;:\; v^{ss} \in E^{ss},\; v^{s} \in E^{s},\; \|v^{s}\| \le \alpha \|v^{ss}\| \bigr\}.
$$
It is well known that, in our setting, if there exists $\alpha>0$ such that
$$
Df_k\bigl(\mathcal{C}_\alpha(E^{uu},E^s)\bigr)\subset \mathcal{C}_\alpha(E^{uu},E^s)
\quad\text{and}\quad
Df^{-1}_k\bigl(\mathcal{C}_\alpha(E^{ss},E^s)\bigr)\subset \mathcal{C}_\alpha(E^{ss},E^s),
$$
then $f$ admits a partially hyperbolic splitting
$$
E^{uu}_k\oplus_{\succ} E^c_k \oplus_{\succ} E^{ss}_k,
$$
with $$
E_k^{uu}\subset \mathcal{C}_\alpha(E^{uu},E^s), \quad 
E_k^{ss}\subset \mathcal{C}_\alpha(E^{ss},E^s), 
\quad \text{and} \quad E_k^c = E^s,
$$ where $E_k^c \oplus E_k^{ss}$ corresponds to the center-stable bundle $E^{cs}$ in the definition of partial hyperbolicity.

\begin{lem}\label{jixian}
For any $\varepsilon>0$, there exists $K(\varepsilon)$ such that for every $k\ge K(\varepsilon)$,
 $$
Df_k\bigl(\mathcal{C}_\varepsilon(E^{uu},E^s)\bigr)\subset \mathcal{C}_\varepsilon(E^{uu},E^s)
\quad\text{and}\quad
Df_k^{-1}\bigl(\mathcal{C}_\varepsilon(E^{ss},E^s)\bigr)\subset \mathcal{C}_\varepsilon(E^{ss},E^s),
$$
Consequently, $f_k$ is partially hyperbolic for every $k \geq K(\varepsilon)$. 
\end{lem}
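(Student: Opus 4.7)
The plan is to reduce cone invariance to two explicit scalar inequalities on $\Lambda(p)$ and then quote the standard cone criterion. Away from $f_k^{-1}(\Lambda(p))=A^{-1}(\Lambda(p))$, one has $I_k=\mathrm{Id}$, so $Df_k=A$ on the eigenbasis $E^{uu}\oplus E^s\oplus E^{ss}$; both cones are preserved trivially since $\lambda_{uu}>1>\lambda_s>\lambda_{ss}$ with $E^{uu},E^s,E^{ss}$ being genuine $A$-eigenspaces. So the content is restricted to the matrix expressions for $Df_k$ on $f_k^{-1}(\Lambda(p))$ and for $Df_k^{-1}$ on $\Lambda(p)$ displayed above.

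For the unstable cone, pick $v=v^{uu}e_1+v^{s}e_2$ with $|v^s|\le\varepsilon|v^{uu}|$ and read off from the $Df_k$ matrix
$$
(Df_kv)^{uu}=\lambda_{uu}v^{uu},\qquad (Df_kv)^{s}=-\tfrac{\lambda_{uu}P_a}{P_b}v^{uu}+\tfrac{1}{P_b}v^{s},\qquad (Df_kv)^{ss}=0.
$$
After the triangle inequality and multiplication by $P_b$, the condition $|(Df_kv)^s|\le\varepsilon|(Df_kv)^{uu}|$ reduces to $\lambda_{uu}|P_a|+\varepsilon\le\varepsilon\lambda_{uu}P_b$. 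Using the lower bound $P_b\ge 1/2$ from Lemma~\ref{zhongxin}, a sufficient condition is
$$
|P_a|\le\varepsilon\bigl(\tfrac{1}{2}-\tfrac{1}{\lambda_{uu}}\bigr),
$$
whose right-hand side is a strictly positive constant depending only on $\varepsilon$ because $\lambda_{uu}>2$. Lemma~\ref{zhongxin} gives $P_a\to 0$ uniformly on $\Lambda(p)$ as $k\to\infty$, so the bound is attained for all $k\ge K_1(\varepsilon)$.

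For the stable cone, pick $v=v^{s}e_2+v^{ss}e_3$ with $|v^s|\le\varepsilon|v^{ss}|$; the $Df_k^{-1}$ matrix gives $(Df_k^{-1}v)^{uu}=0$, $(Df_k^{-1}v)^s=P_bv^s+P_cv^{ss}$, $(Df_k^{-1}v)^{ss}=v^{ss}/\lambda_{ss}$, and the invariance inequality collapses to $|P_c|\le\varepsilon(1/\lambda_{ss}-P_b)$. The upper bound $P_b\le 1/(2\lambda_{ss})$ from Lemma~\ref{zhongxin} makes the right-hand side at least $\varepsilon/(2\lambda_{ss})>0$, again a positive constant in $\varepsilon$, and $P_c\to 0$ by Lemma~\ref{zhongxin} yields invariance for all $k\ge K_2(\varepsilon)$. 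Setting $K(\varepsilon):=\max\{K_1(\varepsilon),K_2(\varepsilon)\}$ and invoking the standard cone criterion (that cone invariance for $Df_k$ and $Df_k^{-1}$ together with the quantitative expansion/contraction inside the cones yield a dominated, partially hyperbolic splitting) completes the proof.

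I do not see a genuine obstacle here: the argument is a bounded linear-algebra estimate on a uniformly compact set, and the only subtlety is making sure one uses both the lower bound on $P_b$ (for the unstable cone) and the upper bound on $P_b$ (for the stable cone), together with $\lambda_{uu}>2$ to keep the slack factors strictly positive so that the vanishing of $P_a,P_c$ with $k$ can absorb everything.
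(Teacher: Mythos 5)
Your proof is correct and follows the same strategy the paper indicates: use the domination $\lambda_{uu}>1/P_b>\lambda_{ss}$ from Lemma~\ref{zhongxin}, observe that $P_a,P_c\to0$ uniformly as $k\to\infty$, and conclude cone invariance plus the standard cone criterion. The paper leaves the "thus it follows immediately" step implicit, whereas you spell out the two sufficient scalar bounds $|P_a|\le\varepsilon(\tfrac12-\tfrac1{\lambda_{uu}})$ and $|P_c|\le\varepsilon(\tfrac1{\lambda_{ss}}-P_b)$, which is the right calculation.
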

\begin{proof}
By Lemma~\ref{zhongxin} and the assumptions of our setting, the following inequality holds everywhere:
$$
\lambda_{uu} > \frac{1}{P_b} > \lambda_{ss},
$$
and 
$$
\lim_{k \to +\infty} \frac{\partial P}{\partial a} = 0, \quad
\lim_{k \to +\infty} \frac{\partial P}{\partial c} = 0.
$$
Thus, the lemma follows immediately.
\end{proof}

\begin{lem}
The point $p$ is a hyperbolic fixed point of $f_k$ with unstable index 2.
\end{lem}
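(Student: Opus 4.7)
The plan is straightforward: compute $Df_k$ at $p$ directly from the matrix expression for $Df_k^{-1}|_{\Lambda(p)}$ already derived in the excerpt, and then read off its eigenvalues. First I would verify that $p$ is fixed by $f_k$. In the adapted coordinates centered at $p$, the point $p$ corresponds to $(a,b,c)=(0,0,0)$, and the formula for $P$ yields $P(0,0,0)=\psi(0)\psi(0)\bigl(\tfrac{1}{2}-\tfrac{1}{\lambda_s}\bigr)\cdot 0+\tfrac{1}{\lambda_s}\cdot 0=0$, so $I_k^{-1}(p)=p$ and therefore $f_k(p)=A(p)=p$.

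Next I would evaluate $P_a$, $P_b$, and $P_c$ at the origin. From the formulas recorded in the proof of Lemma~\ref{zhongxin}, both $P_a$ and $P_c$ carry an explicit factor of $b$ (inherited from the $b$ that multiplies $\psi(kb)\psi(r)$ in $P$), so they automatically vanish at $b=0$; the apparent singularity $a/r$ in the chain rule is harmless because $\psi\equiv 1$ on $[0,\tfrac{\delta}{2}]$ forces $\psi'(r)\equiv 0$ in a neighborhood of the origin, which is consistent with the $C^\infty$-smoothness of $I_k$ already recorded. For the remaining diagonal entry,
$$
P_b(0,0,0)=\Bigl(\tfrac{1}{2}-\tfrac{1}{\lambda_s}\Bigr)\,\psi(0)\,\bigl[\,0\cdot\psi'(0)+\psi(0)\,\bigr]+\tfrac{1}{\lambda_s}=\tfrac{1}{2},
$$
using $\psi(0)=1$.

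Plugging these three values into the $3\times 3$ matrix formula for $Df_k^{-1}|_{\Lambda(p)}$ gives, in the basis $E^{uu}\oplus E^{s}\oplus E^{ss}$,
$$
Df_k^{-1}|_p=\mathrm{diag}\bigl(\tfrac{1}{\lambda_{uu}},\,\tfrac{1}{2},\,\tfrac{1}{\lambda_{ss}}\bigr),\qquad\text{hence}\qquad Df_k|_p=\mathrm{diag}\bigl(\lambda_{uu},\,2,\,\lambda_{ss}\bigr).
$$
Since $\lambda_{uu}>2>1$ and $0<\lambda_{ss}<1$ by the choice of $A$, all three eigenvalues have modulus different from $1$, and exactly two of them exceed $1$. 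Thus $p$ is a hyperbolic fixed point of $f_k$ with unstable index $2$.

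I do not expect any serious obstacle here; the lemma is essentially a bookkeeping consequence of the construction. The only subtlety is the removable-singularity check for $P_a$ and $P_c$ mentioned above, which is immediate from the local flatness of $\psi$ near the origin, and the observation that the diagonal form of $Df_k|_p$ is forced by the vanishing of the off-diagonal entries $P_a$ and $P_c$ rather than by any cancellation with the dynamics of $A$.
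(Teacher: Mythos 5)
Your proposal is correct and follows exactly the same route as the paper: evaluate $P_a$, $P_b$, $P_c$ at the origin to get $Df_k(p)=\mathrm{diag}(\lambda_{uu},2,\lambda_{ss})$ and read off that $p$ is hyperbolic with unstable index $2$. The paper simply states the matrix without the bookkeeping you spell out (the vanishing of $P_a,P_c$ via the explicit $b$ factor and local flatness of $\psi$, and $P_b(0,0,0)=\tfrac12$), so your version is a more detailed rendering of the same computation.
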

\begin{proof}
Since
$$
Df_k(p) = 
\begin{pmatrix}
\lambda_{uu} & 0 & 0 \\[1mm]
0 & 2 & 0 \\[1mm] 
0 & 0 & \lambda_{ss} 
\end{pmatrix},
$$
it follows directly from this that the result stated in the lemma holds.
\end{proof}

In the following proof, we implicitly assume that $\varepsilon \leq \kappa$, where $\kappa$ is the constant given in inequality~\eqref{kappa} and $\varepsilon$ is as in Lemma~\ref{jixian}.

\subsection{Existence of $k$ such that $f_k$ is Partially Volume Expanding}\label{volum}

 Let
$$
v^{uu}=\frac{1}{\sqrt{1+\epsilon^2}}\left(1, \, \epsilon, \, 0\right)
$$
be a unit vector in $E^{uu}_k$. Keep in mind that
$$
E^{uu}_k \subset \mathcal{C}_\varepsilon(E^{uu}_A, E^s_A).
$$
Let $V$ be a two-dimensional linear subspace containing $E^{uu}_k$. Then $V$ can be written as
$$
V=\{\,x v^{uu} + y v : x,y \in \mathbb{R},\; v^{uu}\in E^{uu}_k,\; v \perp v^{uu}\,\}.
$$
The linear subspace orthogonal to $v^{uu}$   is
$$
\text{span}\big\{(-\epsilon,1,0),\; (0,0,1)\big\}.
$$
For simplicity, define
{\color{black}$$
Q_a := \lambda_{uu} P_a, \quad Q_c := \lambda_{ss} P_c.
$$}

We first prove that the following holds.

\begin{lem}\label{delta}
There exists a constant $K_1$ such that
$$
|\det(Df_k|_V)| > 1 \quad \text{for all } k \ge K_1,
$$
where
$$
V = \Bigl\{ x v^{uu} + y v : x, y \in \mathbb{R}, \; v^{uu} \in E^{uu}_k, \; v = \frac{1}{\sqrt{1 + \epsilon^2 + c^2}}(-\epsilon, \, 1, \, c), \; |c| \le \frac{1}{100} \Bigr\}.
$$
\end{lem}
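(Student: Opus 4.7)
The plan is to compute $|\det(Df_k|_V)|$ directly as the norm of the exterior product $\|Df_k v^{uu}\wedge Df_k v\|$; this equals $|\det(Df_k|_V)|$ since $v^{uu}$ and $v$ form an orthonormal basis of $V$. Using the explicit matrix of $Df_k$ in the basis $E^{uu}\oplus E^s\oplus E^{ss}$ displayed just before the lemma, together with Lemma~\ref{zhongxin}, I will show that this cross product is bounded below by a constant strictly greater than $1$ in the limit $k\to\infty$, uniformly in the admissible $(\epsilon,c)$, and then a standard continuity argument yields the required $K_1$.

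Step one is a direct calculation. Setting $Q_a := \lambda_{uu}P_a$ and $Q_c := \lambda_{ss}P_c$, one obtains
\[
Df_k v^{uu}=\frac{1}{\sqrt{1+\epsilon^2}}\bigl(\lambda_{uu},\; (\epsilon-Q_a)/P_b,\;0\bigr),
\]
\[
Df_k v=\frac{1}{\sqrt{1+\epsilon^2+c^2}}\bigl(-\lambda_{uu}\epsilon,\;(1+\epsilon Q_a-cQ_c)/P_b,\;\lambda_{ss}c\bigr).
\]
Writing $u_1=(a_1,a_2,0)$ and $u_2=(b_1,b_2,b_3)$ for these images, the cross product formula gives
\[
\|u_1\wedge u_2\|^2 = (a_1^2+a_2^2)\,b_3^2 + (a_1 b_2-a_2 b_1)^2,
\]
and a brief computation yields
\[
a_1 b_2-a_2 b_1 = \frac{\lambda_{uu}\,(1+\epsilon^2-cQ_c)}{P_b\sqrt{(1+\epsilon^2)(1+\epsilon^2+c^2)}}.
\]

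Step two is to pass to the limit $k\to\infty$. By Lemma~\ref{zhongxin}, $P_a,P_c\to 0$ uniformly on $\mathbb{T}^3$, hence $Q_a,Q_c\to 0$ uniformly. Therefore $\|u_1\wedge u_2\|^2$ converges uniformly to
\[
\Phi(\epsilon,c,P_b):=\frac{\lambda_{uu}^2(1+\epsilon^2)}{P_b^2(1+\epsilon^2+c^2)}+\frac{\lambda_{ss}^2 c^2(\lambda_{uu}^2+\epsilon^2/P_b^2)}{(1+\epsilon^2)(1+\epsilon^2+c^2)}.
\]
Using $P_b\le 1/(2\lambda_{ss})$ (Lemma~\ref{zhongxin}) and $\lambda_{uu}\lambda_{ss}=1/\lambda_s$ (from \eqref{budengshi2}), the first term alone is at least
\[
\frac{4\lambda_{uu}^2\lambda_{ss}^2(1+\epsilon^2)}{1+\epsilon^2+c^2} = \frac{4}{\lambda_s^2}\cdot\frac{1+\epsilon^2}{1+\epsilon^2+c^2}.
\]
Since $|c|\le 1/100$ forces $(1+\epsilon^2)/(1+\epsilon^2+c^2)\ge 1-10^{-4}$ and $\lambda_s<1$, this bound comfortably exceeds $1$ (indeed, it is close to $4$). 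In the complementary region where $f_k^{-1}(x)\notin\Lambda(p)$ one has $P_b=1/\lambda_s$ and $Q_a=Q_c=0$, and the same formula yields $\Phi\ge(1-10^{-4})/\lambda_{ss}^2>1$.

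The final step is a perturbation argument: since the limit $\Phi$ exceeds $1$ by a margin that is uniform in $(\epsilon,c)$, and $Q_a,Q_c$ vanish uniformly as $k\to\infty$, one can choose $K_1$ large enough that $\|u_1\wedge u_2\|^2>1$ for every $k\ge K_1$, every point of $\mathbb{T}^3$, and every admissible $(\epsilon,c)$. The only genuine technical obstacle is keeping all estimates uniform in $(\epsilon,c)$ and controlling the cross terms $\epsilon Q_a$ and $cQ_c$ multiplied by $1/P_b$ (which can be as large as $2$); since $\epsilon$ and $c$ are bounded by absolute constants while $Q_a,Q_c=o(1)$ as $k\to\infty$, this perturbation is routine to absorb, completing the argument.
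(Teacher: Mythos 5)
Your argument is correct, and it follows the same strategy as the paper (compute $\det(Df_k|_V)^2$ via the Lagrange identity $\|u_1\|^2\|u_2\|^2-\langle u_1,u_2\rangle^2$, then bound below using $1/P_b^2\ge 4\lambda_{ss}^2$, $\lambda_{ss}\lambda_s\lambda_{uu}=1$, and $Q_a,Q_c\to 0$), but your cross-product organization is a genuine algebraic improvement. The paper expands $\|u_1\|^2\|u_2\|^2-\langle u_1,u_2\rangle^2$ directly, which produces a sum of five terms including a signed cross term $2\lambda_{uu}^2\epsilon\cdot(1+\epsilon Q_a-cQ_c)(-Q_a+\epsilon)/P_b^2$ that has to be absorbed carefully; the paper also invokes $\epsilon\to 0$ as $k\to\infty$. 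You instead exploit the fact that $u_1=Df_k v^{uu}$ has zero third component (since $E^{uu}\oplus E^s$ is $Df_k$-invariant) to write $\|u_1\wedge u_2\|^2=(a_1^2+a_2^2)b_3^2+(a_1b_2-a_2b_1)^2$, and you observe the key cancellation $a_1b_2-a_2b_1=\lambda_{uu}(1+\epsilon^2-cQ_c)/\bigl(P_b\sqrt{(1+\epsilon^2)(1+\epsilon^2+c^2)}\bigr)$, which is independent of $Q_a$. This gives a manifestly nonnegative decomposition whose dominant term is already $\ge 4/\lambda_s^2\cdot(1-10^{-4})>1$, so you only need boundedness of $\epsilon$ rather than $\epsilon\to 0$, and the residual $Q_a,Q_c$ corrections enter as clean uniform $o(1)$ perturbations. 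Your formula $\Phi$ and the regional bookkeeping (inside versus outside $\Lambda(p)$, using $2\lambda_{ss}\le\lambda_s$ which indeed follows from \eqref{budengshi2}) are consistent with the paper's hypotheses, and the final perturbation step is routine as you say. One minor stylistic point: it would be worth stating explicitly that the uniform lower bound on the limit $\Phi$ (close to $4$) leaves ample room to absorb all $O(Q_a)+O(Q_c)$ terms even when multiplied by $1/P_b\le 2$, so that a reader sees at once why no delicate balancing is needed.
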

\begin{proof}
Notice that the determinant of $Df_k$ restricted to the 2-dimensional subspace $V$ corresponds to the area expansion rate . Therefore,
$$
|\det\bigl(Df_k|_V\bigr)| = \|Df_k(v^{uu})\| \cdot \|Df_k(v)\| \cdot |\sin \theta|,
$$
where $\theta$ is the angle between the vectors $Df_k(v^{uu})$ and $Df_k(v)$. It follows that
$$
\begin{aligned}
\det\bigl(Df_k|_V\bigr)^2 &= \|Df_k(v^{uu})\|^2 \cdot \|Df_k(v)\|^2 \cdot \sin^2 \theta \\
     &= \|Df_k(v^{uu})\|^2 \cdot \|Df_k(v)\|^2 \cdot(1-\cos^2\theta) \\
     &= \|Df_k(v^{uu})\|^2 \cdot \|Df_k(v)\|^2-<Df_k(v^{uu}),Df_k(v)>^2
\end{aligned}
$$
Recall that
$$
Df_k =
\begin{pmatrix}
\lambda_{uu} & 0 & 0 \\[1mm]
-\dfrac{Q_a}{P_b} & \dfrac{1}{P_b} & -\dfrac{Q_c}{P_b} \\[1mm]
0 & 0 & \lambda_{ss} 
\end{pmatrix}:
E^{uu} \oplus E^s \oplus E^{ss} \to E^{uu} \oplus E^s \oplus E^{ss}.
$$
Direct calculation shows that:
$$
Df_k(v)=\frac{1}{\sqrt{1+\epsilon^2+c^2}}
\begin{pmatrix}
-\lambda_{uu} \epsilon \\[1mm]
\dfrac{1 + \epsilon Q_a - c Q_c}{P_b} \\[1mm]
\lambda_{ss} c
\end{pmatrix},\quad   Df_k(v^{uu})=\frac{1}{ \sqrt{1+\epsilon^2}}
\begin{pmatrix}
\lambda_{uu} \\[1em]
\dfrac{-Q_a + \epsilon}{P_b} \\[0.5em]
0
\end{pmatrix}.
$$
Then
$$
\begin{aligned}
\det\bigl(Df_k|_V\bigr)^2\cdot(1+\epsilon^2+c^2)\cdot (1+\epsilon^2) &=  \Big[(\lambda_{uu}\epsilon)^2 + (\frac{1 + \epsilon Q_a - c Q_c}{P_b})^2 + (\lambda_{ss} c)^2\Big] \cdot \Big[(\lambda_{uu})^2 + (\frac{-Q_a + \epsilon}{P_b})^2\Big] \\
&-\Big[-\lambda_{uu}^2 \epsilon + \frac{(1 + \epsilon Q_a - c Q_c)(-Q_a + \epsilon)}{P_b^2}\Big]^2\\
&=(\frac{1 + \epsilon Q_a - c Q_c}{P_b})^2\cdot(\lambda_{uu})^2 + (\lambda_{ss} c)^2\cdot (\lambda_{uu})^2\\
&+(\lambda_{uu}\epsilon)^2\cdot (\frac{-Q_a + \epsilon}{P_b})^2+(\lambda_{ss} c)^2\cdot(\frac{-Q_a + \epsilon}{P_b})^2\\
&+2\lambda_{uu}^2 \epsilon \cdot \frac{(1 + \epsilon Q_a - c Q_c)(-Q_a + \epsilon)}{P_b^2}\\
&\ge 4(\lambda_{ss} )^2\cdot (\lambda_{uu})^2\cdot (1 + \epsilon Q_a - c Q_c)^2 \\
&+ 8(\lambda_{uu})^2\cdot(\lambda_{ss})^2\cdot\epsilon\cdot(1 + \epsilon Q_a - c Q_c)(-Q_a + \epsilon)\\
&\quad \text{(Recall that } \frac{1}{P_b^2} \ge 4 (\lambda_{ss})^2 \text{ by Lemma~\ref{zhongxin}, and } \lambda_{ss} \cdot \lambda_s \cdot \lambda_{uu} = 1 \text{ by Relation~\ref{budengshi2})}\\
&\ge \frac{4\cdot (1 + \epsilon Q_a - c Q_c)^2}{\lambda_s^2}\\
&+\frac{1}{\lambda_s^2}\cdot  8\cdot\epsilon\cdot(1 + \epsilon Q_a - c Q_c)(-Q_a + \epsilon)
\end{aligned}
$$
By Lemma~\ref{zhongxin} and Lemma~\ref{jixian}, we obtain
$$
\lim_{k\to+\infty} Q_a = 0, 
\quad \lim_{k\to+\infty} Q_c = 0, 
\quad \text{and} \quad 
\lim_{k\to+\infty} \epsilon = 0.
$$
Additionally, given that $|c| \le \frac{1}{100}$, there exists a constant $K_1$ such that the conditions required by the lemma are satisfied.
\end{proof}

\begin{pro}\label{volume}
 There exists $K_3\ge K_1$ such that $f_k$ is partially volume expanding for all $k \ge K_3$.
\end{pro}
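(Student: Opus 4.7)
The plan is to extend Lemma~\ref{delta} to all codimension-one subspaces $V$ containing $v^{uu}$, by splitting on whether the slope parameter $c$ is small or large, and to verify volume expansion separately at points lying outside the modification region $A^{-1}(\Lambda(p))$. Every two-dimensional $V\subset T_x\mathbb{T}^3$ containing $v^{uu}$ admits the parameterization $V = \mathrm{span}(v^{uu}, v)$ with $v = (-\epsilon, 1, c)/\sqrt{1+\epsilon^2+c^2}$ for some $c \in \mathbb{R}$, the limit $|c|\to\infty$ (corresponding to $v = (0,0,1)$) being handled by continuity, so this case split is exhaustive.

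For $x \in A^{-1}(\Lambda(p))$ and $|c| \le 1/100$, Lemma~\ref{delta} directly gives $|\det(Df_k|_V)| > 1$ once $k \ge K_1$. For $|c| \ge 1/100$ at the same points, I would retain the term $(\lambda_{ss}c)^2\lambda_{uu}^2 = c^2/\lambda_s^2$ that was discarded in the proof of Lemma~\ref{delta}, and reorganize the full expansion of $\det(Df_k|_V)^2 \cdot (1+\epsilon^2+c^2)(1+\epsilon^2)$ into the schematic form $\tfrac{1}{\lambda_s^2}(c^2 + c\,u_k + r_k)$, where $u_k, r_k$ are built from $\epsilon, Q_a, Q_c, P_b$. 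Since $\epsilon \le \kappa$ is small by (\ref{kappa}) and $Q_a, Q_c \to 0$ as $k \to \infty$ by Lemma~\ref{zhongxin}, for $k$ sufficiently large one has $|u_k| \le \varepsilon_0$ and $r_k = o(1)$. Lemma~\ref{paowu} applied with $u = u_k$ yields $\frac{c^2 + c u_k}{(1+\epsilon^2+c^2)(1+\epsilon^2)} \ge M$, which combined with $M/\lambda_s^2 > 2$ from (\ref{budengshi2}) produces $\det(Df_k|_V)^2 \ge M/\lambda_s^2 - o(1) > 1$ for all $k \ge K_2$, with some threshold $K_2 \ge K_1$.

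For $x \notin A^{-1}(\Lambda(p))$, $Df_k(x) = A$, so partial volume expansion reduces to showing $|\det(A|_V)| > 1$ for every two-plane containing any unit vector in the cone $\mathcal{C}_\varepsilon(E^{uu}_A, E^s_A)$. Substituting $Q_a = Q_c = 0$ and $P_b = 1/\lambda_s$ in the formula from the proof of Lemma~\ref{delta} reduces $\det(A|_V)^2 \cdot (1+\epsilon^2+c^2)(1+\epsilon^2)$ to $\lambda_s^2\lambda_{uu}^2(1+\epsilon^2)^2 + \lambda_{ss}^2 c^2(\lambda_{uu}^2 + \lambda_s^2\epsilon^2)$, which is monotone decreasing in $c^2$ with boundary values $1/\lambda_{ss}^2$ at $c = 0$ and $(1/\lambda_s^2 + \lambda_{ss}^2\lambda_s^2\epsilon^2)/(1+\epsilon^2)$ as $|c|\to\infty$; both exceed $1$ because $\lambda_s, \lambda_{ss} < 1$ (using $\lambda_{ss}\lambda_s\lambda_{uu} = 1$) and $\epsilon \le \kappa$ is small.

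The principal difficulty lies in the large-$|c|$ estimate inside $A^{-1}(\Lambda(p))$: one must carefully extract, from the expansion of $\det(Df_k|_V)^2$, precisely the linear-in-$c$ coefficient $u_k$ that matches the $c^2 + cu$ shape required by Lemma~\ref{paowu}, and show that it satisfies $|u_k| \le \varepsilon_0$ for large $k$, while all remaining cross and lower-order contributions are either non-negative or absorbed into the $o(1)$ error. Once this bookkeeping is complete, setting $K_3 = \max(K_1, K_2)$ concludes the proof.
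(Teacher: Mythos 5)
Your proposal follows essentially the same route as the paper: split on $|c|\le 1/100$ (covered by Lemma~\ref{delta}) versus $|c|\ge 1/100$, and in the latter case retain the $c^2/\lambda_s^2$ term and reorganize the determinant expansion into the $c^2+cu_k$ shape required by Lemma~\ref{paowu}, with remainder terms going to zero as $k\to\infty$. The paper carries out the bookkeeping you defer, identifying $u_k=8\epsilon(-Q_c)(-Q_a+\epsilon)$ and the residual $w_k=8\epsilon(1+\epsilon Q_a)(-Q_a+\epsilon)$, and treats the limiting direction $v=(0,0,1)$ as a separate explicit computation rather than by your continuity argument, while the case $x\notin A^{-1}(\Lambda(p))$ that you isolate is handled implicitly in the paper since the same formulas apply with $Q_a=Q_c=0$, $P_b=1/\lambda_s$.
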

\begin{proof}

We will first explain a simple  case:
$$
v = (0,0,1).
$$
A direct computation shows that
$$
Df_k(v)=
\begin{pmatrix}
0 \\[1mm]
-\dfrac{Q_c}{P_b} \\[1mm]
\lambda_{ss}
\end{pmatrix}, \quad   Df_k(v^{uu})=\frac{1}{ \sqrt{1+\epsilon^2}}
\begin{pmatrix}
\lambda_{uu} \\[1em]
\dfrac{-Q_a + \epsilon}{P_b} \\[0.5em]
0
\end{pmatrix}.
$$
Analogous to the estimate in Lemma~\ref{delta}, we obtain that
$$
\begin{aligned}
\det\bigl(Df_k|_V\bigr)^2\cdot (1+\epsilon^2) &=\Bigl(\frac{Q_c^2}{P_b^2} + \lambda_{ss}^2\Bigr)
\Bigl(\lambda_{uu}^2 + \frac{(Q_a-\varepsilon)^2}{P_b^2}\Bigr)-\frac{Q_c^2}{P_b^2}\cdot \frac{(Q_a-\varepsilon)^2}{P_b^2} \\
&=\frac{Q_c^2}{P_b^2}\cdot \lambda_{uu}^2+\lambda_{ss}^2\cdot \lambda_{uu}^2+\lambda_{ss}^2\cdot\frac{(Q_a-\varepsilon)^2}{P_b^2}\\
&\ge \lambda_{ss}^2\cdot \lambda_{uu}^2\\
\end{aligned}
$$
Since $\lim_{k\to+\infty} \epsilon = 0$, there exists a constant $K_2 \ge K_1$ such that
$$
\det\bigl(Df_k|_V\bigr) > 1 \quad \text{for all } k \ge K_2,
$$
where
$$
V = \Bigl\{ x v^{uu} + y v : x, y \in \mathbb{R}, \; v^{uu} \in E^{uu}_k, \; v = (0,0,1) \Bigr\}.
$$
By combining Lemma~\ref{delta}, it is enough to show that there exists a constant $K_3 \ge K_2$ such that
$$
|\det(Df_k|_{V_1})| > 1 \quad \text{for all } k \ge K_3,
$$
where
$$
V_1 = \left\{ x v^{uu} + y v : x, y \in \mathbb{R}, \; v^{uu} \in E^{uu}_k, \; v = \frac{1}{\sqrt{1 + \epsilon^2 + c^2}} \left( -\epsilon, \, 1, \, c \right), \; |c| \ge \frac{1}{100} \right\}.
$$

Recalling the proof of Lemma~\ref{delta}, we have
$$
\begin{aligned}
\det\bigl(Df_k|_{V_1}\bigr)^2\cdot(1+\epsilon^2+c^2)\cdot (1+\epsilon^2) 
&=(\frac{1 + \epsilon Q_a - c Q_c}{P_b})^2\cdot(\lambda_{uu})^2 + (\lambda_{ss} c)^2\cdot (\lambda_{uu})^2\\
&+(\lambda_{uu}\epsilon)^2\cdot (\frac{-Q_a + \epsilon}{P_b})^2+(\lambda_{ss} c)^2\cdot(\frac{-Q_a + \epsilon}{P_b})^2\\
&+2\lambda_{uu}^2 \epsilon \cdot \frac{(1 + \epsilon Q_a - c Q_c)(-Q_a + \epsilon)}{P_b^2}\\
&\ge (\lambda_{ss} c)^2\cdot (\lambda_{uu})^2 \\
&+ 8(\lambda_{uu})^2\cdot(\lambda_{ss})^2\cdot[c\cdot\epsilon\cdot(-Q_c)(-Q_a + \epsilon)+\epsilon\cdot(1 + \epsilon Q_a)(-Q_a + \epsilon)]\\
&\quad \text{(Recall that } \frac{1}{P_b^2} \ge 4 (\lambda_{ss})^2 \text{ by Lemma~\ref{zhongxin}, and } \lambda_{ss} \cdot \lambda_s \cdot \lambda_{uu} = 1 \text{ by Relation~\ref{budengshi2})}\\
&\ge \frac{1}{\lambda_s^2}\cdot c^2+\frac{1}{\lambda_s^2}\cdot c\cdot 8\cdot\epsilon\cdot(-Q_c)(-Q_a + \epsilon)+\frac{1}{\lambda_s^2}\cdot 8\cdot\epsilon\cdot(1 + \epsilon Q_a)(-Q_a + \epsilon)
\end{aligned}
$$
Let $u=8\cdot\epsilon\cdot(-Q_c)(-Q_a + \epsilon)$ and $w=8\cdot\epsilon\cdot(1 + \epsilon Q_a)(-Q_a + \epsilon)$. 
By Lemma~\ref{zhongxin} and Lemma~\ref{jixian}, we obtain
$$
\lim_{k\to+\infty} w = 0, 
\quad \lim_{k\to+\infty} u = 0, 
\quad \text{and} \quad 
\lim_{k\to+\infty} \epsilon = 0.
$$
Then there exists $K_3\ge K_2$ such that, for every $k \ge K_3$, we have
$$
|\epsilon| \le \varepsilon_0\quad  |u| \le \varepsilon_0 \quad \text{and} \quad |w| \le \tfrac{\lambda_s^2}{2}.
$$
where $\varepsilon_0$ is given in Lemma~\ref{paowu}.
 It follows from Inequality~\ref{budengshi2}, by taking $k \ge K_3$, that
$$
\begin{aligned}
\det\bigl(Df_k|_{V_1}\bigr)^2 
&\;\ge\; \frac{1}{\lambda_s^2} \cdot 
\frac{c^2 + cu}{(1+\epsilon^2 + c^2)(1+\epsilon^2)} 
\;-\; \frac{1}{\lambda_s^2}\,|w| \\[2mm]
&\;\ge\; 2 \;-\; \frac{1}{\lambda_s^2}\,|w| \\
&\ge 2-\frac{1}{2}\\
&>1.
\end{aligned}
$$
\end{proof}

\subsection{Proof of Theorem~\ref{main}}

\begin{proof}[Proof of Theorem~\ref{main}]
By choosing $k \ge K_3$ and setting {\color{black}$f =I_k\circ A$}, where $K_3$ is as  in Proposition~\ref{volume}, we complete the construction. Since both partial hyperbolicity and partial volume expansion  are $C^1$-open properties, it follows that there exists a $C^1$-neighborhood $\mathcal{U}_f$ of $f$, and every $C^{1+}$-diffeomorphism in $\mathcal{U}_f$ is also partially volume expanding.
\end{proof}

\section{Mostly Expanding Center}\label{bbb}

\subsection{Basic Control Criterion}\label{criter}

We now set $g_k = f_k^{-1}$ and define
$$
\gamma_{uu} := \frac{1}{\lambda_{ss}}, \quad 
\gamma_{u} := \frac{1}{\lambda_{s}}, \quad 
\gamma_{ss} := \frac{1}{\lambda_{uu}}, 
\qquad 
F^{uu} := E^{ss}, \quad 
F^{ss} := E^{uu}, \quad 
F^{c} := E^{s}.
$$
Thus, on $F^{ss} \oplus F^{c} \oplus F^{uu}$, $Dg_k$ takes the form:
{\color{black}$$
Dg_k|_{\Lambda(p)} =  A^{-1}\circ DI_k^{-1}|_{\Lambda(p)} =
\begin{pmatrix}
\gamma_{ss} & 0 & 0 \\[2mm]
P_a & P_b & P_c \\[2mm]
0 & 0 & \gamma_{uu}
\end{pmatrix},
\qquad  
Dg_k|_{\mathbb{T}^3 \setminus \Lambda(p)} =
\begin{pmatrix}
\gamma_{ss} & 0 & 0 \\[2mm]
0 & \gamma_u & 0 \\[2mm]
0 & 0 & \gamma_{uu}
\end{pmatrix},
$$}
$$
\frac{1}{2} \;\le\; P_b \;\le\; \frac{\gamma_{uu}}{2}.
$$
For convenience, with respect to   
 $$ 
 F^{uu} \oplus F^{c} \oplus F^{ss}, 
 $$  
{\color{black} $Dg_k$ can be rewritten  as
$$
Dg_k|_{\Lambda(p)} =
\begin{pmatrix}
\gamma_{uu} & 0 & 0 \\[2mm]
P_c & P_b & P_a \\[2mm]
0 & 0 & \gamma_{ss}
\end{pmatrix},
\qquad  
Dg_k|_{\mathbb{T}^3 \setminus \Lambda(p)} =
\begin{pmatrix}
\gamma_{uu} & 0 & 0 \\[2mm]
0 & \gamma_u & 0 \\[2mm]
0 & 0 & \gamma_{ss}
\end{pmatrix},
$$}
For notational convenience, we assume $g_k = g$.   By our construction, it is clear that $g$ admits a partially hyperbolic splitting  
 $$ 
 T\mathbb{T}^3 = F^{uu}_g \oplus F^c_g \oplus F^{ss}_g, \quad \text{such that} \quad  F^{uu}_g \subset \mathcal{C}_\kappa(F^{uu}, F^c).
$$ 
It follows from the invariance of $F^c$ and the forward invariance of the cone that, for all $n \ge 1$,
\begin{equation}\label{contro}
\frac{1}{\sqrt{1+\kappa^2}}\gamma_{uu}^n \le \big|\det (Dg^n|_{F^{uu}_g})\big| \le \sqrt{1+\kappa^2}\,\gamma_{uu}^n.
\end{equation}
Alternatively, one can directly verify Relation~\ref{contro} by setting $v = v^{uu} + v^c$ such that
$$
\frac{1}{\sqrt{1+\kappa^2}} \le \frac{|v^{uu}|}{|v|} \le 1, \quad 
\begin{pmatrix}
{\color{black}\gamma_{uu}} & 0 \\
* & *
\end{pmatrix}
\begin{pmatrix}
v^{uu} \\
v^c
\end{pmatrix}
=
\begin{pmatrix}
{\color{black}\gamma_{uu}} v^{uu} \\
*
\end{pmatrix}
\in \mathcal{C}_{\kappa}(F^{uu}, F^c).
$$

{\color{black}  Since the Riemannian metric on $\mathbb{T}^3$ is induced by the Euclidean metric on $\mathbb{R}^3$, the lengths and volumes of unstable disks can be considered directly in $\mathbb{R}^3$.  The Riemannian metric on $\T^3$ also induces a (non-normalized) Riemannian volume on each leaf of the strong-unstable foliation, denoted by $m_L$ for any disk $L$ contained in a strong-unstable leaf of $g$.  For any measurable set $U$, $m_L(U)=m_L(U\cap L)$.}

 It is clear that
$$
g^n_*(m_L) = \big|\det(Dg|_{E^{uu}_y}^{-n})\big| \, m_{g^n(L)},
$$
which means that for any measurable set $B \subset g^n(L)$,
\begin{equation}\label{ml}
g^n_*(m_L)(B) = \int_B \big|\det(Dg|_{E^{uu}_y}^{-n})\big| \, dm_{{\color{black}g^n(L)}}.
\end{equation}
It follows from relations~\ref{contro} and~\ref{ml}  that
\begin{equation}\label{contro3}
\frac{ m_{g^n(L)}(B)}{\sqrt{1+\kappa^2}\,\gamma_{uu}^n}\le g^n_*(m_L)(B)\le \frac{m_{g^n(L)}(B)\sqrt{1+\kappa^2}}{\gamma_{uu}^n}.
\end{equation}

By \cite{HPS}, it is clear that, for any strong-unstable disk $U$ of $g$ and any $x \in U$,
$$
U \subset \mathcal{F}^{uu}(x, A^{-1}) \times \mathcal{F}^{u}(x, A^{-1}),
$$
where $\mathcal{F}^{uu}(x, A^{-1}) = \mathcal{F}^{ss}(x, A)=\mathcal{F}^{ss}(x, D)$ and $\mathcal{F}^{u}(x, A^{-1}) = \mathcal{F}^{s}(x, A)=\mathcal{F}^{s}(x, D)$. Define
$$
\pi_{uu}^x : \mathcal{F}^{uu}(x, A^{-1}) \times \mathcal{F}^{u}(x, A^{-1}) \to \mathcal{F}^{uu}(x, A^{-1}) \quad \text{by} \quad \pi_{uu}^x(a,b) = a.
$$
At this stage, we decompose $g^n(L)$ into finitely many mutually disjoint segments
$$
g^n(L) = L_1 \cup L_2 \cup \cdots \cup L_{k(n)} \cup L(n)
$$
such that for each $i = 1, 2, \dots, k(n)$,  there exists some point $x \in L_i$:
$$
\pi_{uu}^x(L_i) = \F^{uu}_{\frac{1}{4}}(x, A^{-1}),
$$
while the remaining segment $L(n)$ satisfies, for every $x \in L(n)$,
$$
0 \le |\pi_{uu}^x(L(n))| < \frac{1}{2}.
$$
It follows from relation~$\ref{slope}$ that, for each $i$,
$$
\frac{m_{L_i}(L_i \cap \Lambda(p))}{m_{L_i}(L_i)} \le \frac{\sqrt{1+\kappa^2}}{100}.
$$
We provide the following cross-sectional diagram for better understanding.
\begin{equation}\label{varep}
	\includegraphics[width=0.8\textwidth]{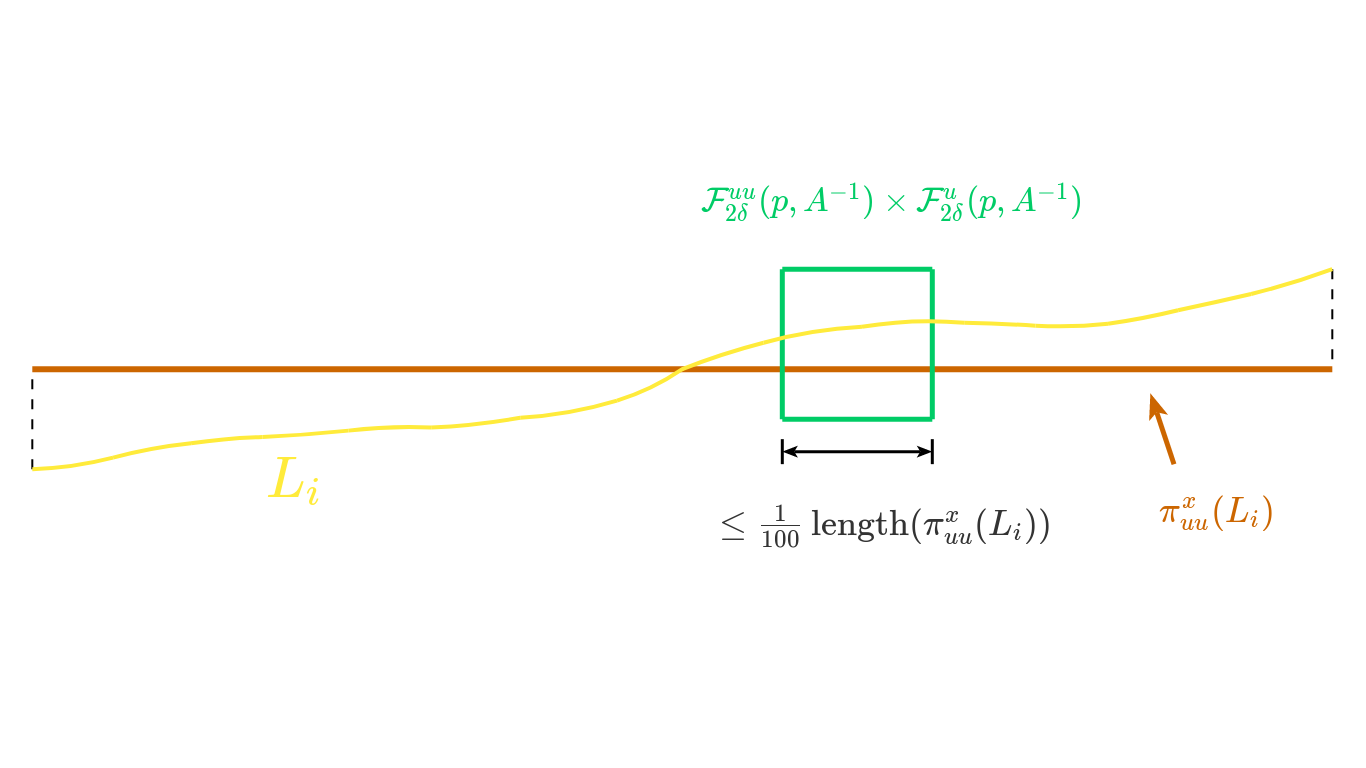}
\end{equation}
In particular,
$$ 
m_{L(n)}(L(n))\le \frac{\sqrt{1+\kappa^2}}{2}. 
$$

Since $$m_{g^n(L)}=m_{L(n)}+\sum_{1\le i\le k(n)}m_{L_i},$$  it follows that
\begin{equation}\label{contro2}
\frac{m_{g^n(L)}(\Lambda(p))}{m_{g^n(L)}(g^n(L))}\le\frac{\sum_{1\le i\le k(n)}m_{L_i}(\Lambda(p))+ \frac{\sqrt{1+\kappa^2}}{2}}{\sum_{1\le i\le k(n)}m_{L_i}(L_i)}\le\frac{\sqrt{1+\kappa^2}}{100}+\frac{\frac{\sqrt{1+\kappa^2}}{2}}{m_{g^n(L)}(g^n(L))-\frac{\sqrt{1+\kappa^2}}{2}}
\end{equation}
Since  the Lebesgue measure of a $C^1$ curve coincides with its length, it follows from relation~$\ref{contro}$ that 
$$
\frac{1}{\sqrt{1+\kappa^2}}\gamma_{uu}^n\operatorname{length}(L)  \le m_{g^n(L)}(g^n(L)) = \operatorname{length}(g^n(L))\le  \sqrt{1+\kappa^2}\,\gamma_{uu}^n\operatorname{length}(L).
$$
It follows from relation~\ref{contro2} that
$$
{\color{black}m_{g^n(L)}(\Lambda(p))}\le \frac{1+\kappa^2}{100}\gamma_{uu}^n\operatorname{length}(L)+{\color{black}\frac{\frac{\sqrt{1+\kappa^2}}{2}}{1-\frac{1+\kappa^2}{2\gamma_{uu}^n\operatorname{length}(L)}}.}
$$
Combining this with relation~$\ref{contro3}$, we then obtain
$$
g^n_*(m_L)(\Lambda(p))\le \frac{m_{g^n(L)}(\Lambda(p))\sqrt{1+\kappa^2}}{\gamma_{uu}^n}\le \frac{(1+\kappa^2)^{\frac{3}{2}}}{100}\operatorname{length}(L)+\frac{\frac{1+\kappa^2}{2}}{\gamma_{uu}^n-{\color{black}\frac{1+\kappa^2}{2\operatorname{length}(L)}}}
$$
Then we have 
\begin{equation}\label{contro0}
\frac{1}{\operatorname{length}(L)}g^n_*(m_L)(\Lambda(p))\le \frac{(1+\kappa^2)^{\frac{3}{2}}}{100}+\frac{\frac{1+\kappa^2}{2}}{\gamma_{uu}^n\operatorname{length}(L)-\frac{{\color{black}1+\kappa^2}}{2}}
\end{equation}

(Please keep in mind that relation~\ref{contro0} is derived only from relation~\ref{contro} and relation~\ref{slope}.) 

\subsection{Proof of Theorem~\ref{main2}}

\begin{proof}[Proof of Theorem~\ref{main2}]
Since the ergodic components of Gibbs $u$-states are  Gibbs $u$-states (from \cite[Section~11.2]{CLM}), it suffices to prove that every ergodic Gibbs $u$-state has only positive Lyapunov exponents along $F^c$. By the Birkhoff ergodic theorem, for an ergodic Gibbs $u$-state $\mu$, having only positive Lyapunov exponents along $F^c$ is equivalent to
$$
\int \log \det \big(Dg|_{F^c}\big)\, d\mu > 0.
$$
It remains to show that
$$
\int \log \det \big(Dg|_{F^c}\big)\, d\mu > 0.
$$
Combining the definition of a Gibbs $u$-state with ergodicity, we obtain that there exists a disk $L$ contained in a strong-unstable leaf of $g$ such that
$$
\lim_{\ell \to +\infty} \frac{1}{\ell} \sum_{n=0}^{\ell-1} \frac{g^n_* m_L}{m_L(L)} = \mu.
$$
By relation~$\ref{contro0}$ and the identity $\operatorname{length}(L) = m_L(L)$, there exists a sufficiently large $N$ such that for every $n \geq N$, 
$$
\frac{g^n_* m_L(\Lambda(p))}{m_L(L)} \leq \frac{(1+\kappa^2)^{3/2}}{100} + \frac{1}{100}.
$$
It follows from the openness of $\Lambda(p)$ that
$$
\mu(\Lambda(p)) \;\leq\; \liminf_{\ell \to +\infty} \frac{1}{\ell} \sum_{n=0}^{\ell-1} \frac{g^n_* m_L}{m_L(L)}(\Lambda(p)) \;\leq\; \frac{(1+\kappa^2)^{3/2}}{100} + \frac{1}{100}.
$$
Therefore, it follows from relation~$\ref{kappa}$ that
$$
\int_{\Lambda(p)\cup (\T^3\setminus\Lambda(p))}\log(\det Dg|_{F^c})d\mu\ge (\frac{(1+\kappa^2)^{\frac{3}{2}}}{100}+\frac{1}{100})\log(\frac{1}{2})+(\frac{99}{100}-\frac{(1+\kappa^2)^{\frac{3}{2}}}{100})\log(\gamma_u)>0.
$$
\end{proof}

We remark that some of the ideas presented here are inspired by the construction of the four-dimensional mixed center in \cite{Zhang}. However, we generalize the methods in \cite{Zhang}, meaning that the weight (with respect to the ergodic Gibbs $u$-state) of the modified regions can be set arbitrarily small.

\section{Application: The Mixed Center Case}\label{ccc}
Let
$$
C:\mathbb{T}^3 \to \mathbb{T}^3
$$
be the hyperbolic automorphism induced by
$$
\begin{pmatrix}
1 & -1 & 0\\
-1 & 1 & 1\\
0 & 1 & -1
\end{pmatrix},
$$
and  $C$ admits a partially hyperbolic splitting
$$
T\mathbb{T}^3 = E^{uu} \oplus E^{u} \oplus E^{ss}.
$$
(It can be directly verified that $C$ is the inverse map of $D$.)
The foliations tangent to $E^{ss}$, $E^{s}$, and $E^{uu}$ are denoted by
$\mathcal{F}^{ss}(C)$, $\mathcal{F}^{s}(C)$, and $\mathcal{F}^{uu}(C)$, respectively. Analogously to Subsection~\ref{setup}, we equip $\mathbb{T}^3 = \mathbb{R}^3 / \mathbb{Z}^3$ with the Riemannian metric induced by the Euclidean metric on $\mathbb{R}^3$.  All determinants and curve lengths are computed with respect to this metric. 
There exist two fixed points $q_1$ and $q_2$, and a sufficiently small $\delta > 0$, such that for every $x \in \mathbb{T}^3$ the length of
\begin{equation}\label{slope2}
\mathcal{F}^{uu}_{\tfrac{1}{4}}(x, C) \;\cap\; 
\big( \mathcal{F}^{uu}_{2\delta}(q_i, C) \times \mathcal{F}^{u}_{2\delta}(q_i, C) \times \mathcal{F}^{ss}_{2\delta}(q_i, C) \big) \quad \text{for each } i=1,2
\end{equation}
is at most $\tfrac{1}{200}$ and 
$$
\big( \mathcal{F}^{uu}_{5\delta}(q_1, C) \times \mathcal{F}^{u}_{5\delta}(q_1, C) \times \mathcal{F}^{ss}_{5\delta}(q_1, C) \big)\cap \big( \mathcal{F}^{uu}_{5\delta}(q_2, C) \times \mathcal{F}^{u}_{5\delta}(q_2, C) \times \mathcal{F}^{ss}_{5\delta}(q_2, C) \big)=\emptyset.
$$
We fix $\delta$ with above property. {\color{black} Choose a $C^\infty$-smooth function $\phi \colon \mathbb{R} \to \mathbb{R}$ such that:
\begin{itemize}
\item $\phi(x) = \phi(-x)$ for all $x \in \mathbb{R}$ (i.e., $\phi$ is symmetric about $x = 0$),
\item $\phi(x)$ is strictly monotone on $\left( \frac{\delta}{2}, \delta \right)$,
\item $\phi(x) = 1$ for $x \in \left[ 0, \frac{\delta}{2} \right]$, and $\phi(x) = 0$ for $x \in [\delta, +\infty)$.
\end{itemize}
Once $\phi$ is fixed, it is non-zero only on a bounded closed set, and we have $x \, \phi'(x) \leq 0$.  Hence, there exists a constant $m > 0$ such that
\begin{equation}\label{budeng}
-m\le\bigl(x\, \phi'(x) + \phi(x)\bigr) \phi(y) \le 1 \quad \text{for all } x, y.
\end{equation}
}

Now let $B= C^{2n}$ be a hyperbolic linear automorphism on $\mathbb{T}^3$  for sufficiently large $n$, so that 
\begin{itemize}
    \item \( B \) has eigenvalues \( 0<\lambda_{ss}<\frac{1}{2} < 2<\lambda_{u} <   \lambda_{uu}\)  such that
\begin{equation}\label{budeng2}
\lambda_{ss} \cdot \lambda_s \cdot \lambda_{uu} = 1,  \qquad 
-\,m\Bigl(\frac{1}{2} -\lambda_u\Bigr) + \lambda_u \le \frac{\lambda_{uu}}{2 }.
\end{equation}
 There exists a small constant $\kappa_2 > 0$ such that
\begin{equation}\label{kappa2}
(\frac{(1+\kappa^2_2)^{\frac{3}{2}}}{100}+\frac{1}{100})\log(\frac{1}{2}-\kappa_2)+(\frac{99}{100}-\frac{(1+\kappa_2^2)^{\frac{3}{2}}}{100})\log(\frac{1}{\sqrt{1+\kappa_2^2}} \lambda_{u} )>0,
\end{equation}
and 
{\color{black}\begin{equation}\label{kappa33}
(\frac{(1+\kappa^2_2)^{\frac{3}{2}}}{100}+\frac{1}{100})\log(2+\kappa_2)+(\frac{99}{100}-\frac{(1+\kappa_2^2)^{\frac{3}{2}}}{100})\log(\sqrt{1+\kappa_2^2} {\color{black}\lambda_{ss}})<0.
\end{equation}
(Analogously to relation~\ref{kappa}, \ref{kappa2} and \ref{kappa33} can always be achieved.)}
    \item The eigenvalues $\lambda_{ss}, \lambda_{u}, \lambda_{uu}$ correspond to mutually orthogonal eigenspaces $E^{ss}, E^{u}, E^{uu}$. The foliations that are tangent to these eigenspaces everywhere are denoted by \( \mathcal{F}^{ss}(B), \mathcal{F}^u(B), \mathcal{F}^{uu}(B) \), respectively.
    \item $B$ has two fixed points $q_i$, each admitting an open neighborhood $U_{q_i}$ such that $\Lambda(q_i)$ is properly contained in $U_{q_i}$, where
$$
\Lambda(q_i) = \operatorname{Int}\big( \mathcal{F}^{uu}_{2\delta}(q_i) \times \mathcal{F}^{u}_{2\delta}(q_i) \times \mathcal{F}^{ss}_{2\delta}(q_i) \big),  
\qquad 
U_{q_i} = \operatorname{Int}\big( \mathcal{F}^{uu}_{4\delta}(q_i) \times \mathcal{F}^{u}_{4\delta}(q_i) \times \mathcal{F}^{ss}_{4\delta}(q_i) \big). 
$$
\end{itemize}
 At this point, we have
$$
\mathcal{F}^{*}(B) = \mathcal{F}^{*}(C), \quad \text{for each } * \in \{uu, u, ss\}.
$$

In the following definition, we regard each fixed point $q_i$ as the origin $(0,0,0)$. Assume, without loss of generality, that $U_{q_1}$ and $U_{q_2}$ are disjoint.
Now we define the map $J_k$ as follows:
{\color{black}\begin{itemize}
\item For $(a,b,c) \in U_{q_1}$, set
$     J_k(a,b,c) = \Bigl(a, \,  \frac{1}{\lambda_u}\cdot Q_1(a,b,c), \, c\Bigr),
    $
where
$$  Q_1(a,b,c) = \phi(kb)\cdot \phi\bigl(\sqrt{a^2 + c^2}\bigr)\cdot \Bigl(\tfrac{1}{2} - \lambda_u\Bigr)b + \lambda_u b.
$$
\item For $(a,b,c) \in U_{q_2}$, set
$     J_k^{-1}(a,b,c) = \Bigl(a, \,  b, \, \lambda_{ss}\cdot Q_2(a,b,c)\Bigr),
    $
where
$$  Q_2(a,b,c) = \phi(kc) \cdot \phi\bigl(\sqrt{a^2 + b^2}\bigr) \cdot (\frac{1}{2} - \frac{1}{\lambda_{ss}})c + \frac{1}{\lambda_{ss}} c.
    $$

\item For $(a,b,c) \notin U_{q_1}\cup U_{q_2}$, set $J_k = I$, the identity map.
\end{itemize}}

Notice that $U_{q_1}$ and $U_{q_2}$ are disjoint.
The map $J_k$ coincides with the identity map on $\T^3\setminus (U_{q_1}\cup U_{q_2})$.  Similar to Lemma~\ref{zhongxin}, we have the following lemma.

\begin{lem}\label{mix}
The map $J_k$ satisfies the following properties:
$$
\frac{1}{2} \le \frac{\partial Q_1}{\partial b} \le \frac{\lambda_{uu}}{2}, \quad 
J_k(\Lambda(q_1)) = \Lambda(q_1), \quad 
\lim_{k \to +\infty} \frac{\partial Q_1}{\partial a} = 0, \quad
\lim_{k \to +\infty} \frac{\partial Q_1}{\partial c} = 0.
$$
$$
\frac{1}{2} \le \frac{\partial Q_2}{\partial c}, \quad J_k(\Lambda(q_2)) = \Lambda(q_2), \quad 
\lim_{k \to +\infty} \frac{\partial Q_2}{\partial a} = 0, \quad
\lim_{k \to +\infty} \frac{\partial Q_2}{\partial b} = 0.
$$
Moreover, $J_k$ is a $C^\infty$ diffeomorphism.
\end{lem}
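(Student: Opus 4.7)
\bigskip

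\noindent\textbf{Proof proposal for Lemma~\ref{mix}.}

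The plan is to mimic the proof of Lemma~\ref{zhongxin} and apply it separately on each of the two disjoint neighborhoods $U_{q_1}$ and $U_{q_2}$, since $J_k$ is the identity off $U_{q_1}\cup U_{q_2}$ and the two local modifications do not interact. First, on $U_{q_1}$, I set $r=\sqrt{a^2+c^2}$ and differentiate
$$Q_1(a,b,c)=\phi(kb)\,\phi(r)\bigl(\tfrac{1}{2}-\lambda_u\bigr)b+\lambda_u b$$
with respect to $b$, obtaining
$$\frac{\partial Q_1}{\partial b}=\bigl(\tfrac{1}{2}-\lambda_u\bigr)\phi(r)\bigl[kb\,\phi'(kb)+\phi(kb)\bigr]+\lambda_u.$$
Since $\lambda_u>2$, the factor $\tfrac{1}{2}-\lambda_u$ is negative, so the affine map $X\mapsto(\tfrac{1}{2}-\lambda_u)X+\lambda_u$ is decreasing. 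Plugging the bounds $-m\le(kb\,\phi'(kb)+\phi(kb))\phi(r)\le 1$ from inequality~\ref{budeng} and using the second estimate in~\ref{budeng2} then yields $\tfrac{1}{2}\le\partial Q_1/\partial b\le\lambda_{uu}/2$ exactly. The same argument applied to $Q_2$, using $\lambda_{ss}<\tfrac{1}{2}$ (so $\tfrac{1}{2}-\tfrac{1}{\lambda_{ss}}<0$) and the monotone decreasing affine $X\mapsto(\tfrac{1}{2}-\tfrac{1}{\lambda_{ss}})X+\tfrac{1}{\lambda_{ss}}$, gives the lower bound $\tfrac{1}{2}\le\partial Q_2/\partial c$.

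Next, for the invariance $J_k(\Lambda(q_1))=\Lambda(q_1)$, I observe that the second coordinate of $J_k$ on $U_{q_1}$ is $b\mapsto\tfrac{1}{\lambda_u}Q_1(a,b,c)$, whose $b$-derivative is $\tfrac{1}{\lambda_u}\partial Q_1/\partial b\ge\tfrac{1}{2\lambda_u}>0$, hence strictly increasing in $b$. At $b=\pm\delta$, the cutoff $\phi(kb)$ vanishes (for $k\ge 1$), so $\tfrac{1}{\lambda_u}Q_1(a,\pm\delta,c)=\pm\delta$. By connectedness the map sends $\{a\}\times\mathcal{F}^u_\delta(q_1)\times\{c\}$ to itself for each $(a,c)\in\mathcal{F}^{uu}_\delta(q_1)\times\mathcal{F}^{ss}_\delta(q_1)$, and hence $J_k(\Lambda(q_1))=\Lambda(q_1)$. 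An entirely parallel argument, applied to the inverse formula for $J_k^{-1}$ on $U_{q_2}$, shows $J_k(\Lambda(q_2))=\Lambda(q_2)$.

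For the vanishing limits, where $\partial Q_1/\partial a$ and $\partial Q_1/\partial c$ are nonzero I compute
$$\frac{\partial Q_1}{\partial a}=\phi(kb)\bigl(\tfrac{1}{2}-\lambda_u\bigr)b\,\phi'(r)\,\tfrac{a}{r},\qquad \frac{\partial Q_1}{\partial c}=\phi(kb)\bigl(\tfrac{1}{2}-\lambda_u\bigr)b\,\phi'(r)\,\tfrac{c}{r}.$$
Since $\phi(kb)=0$ for $|kb|\ge\delta$, the support in $b$ shrinks to $|b|\le\delta/k$, so each expression is bounded by a constant times $\delta/k$ and therefore tends to $0$ as $k\to+\infty$. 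The analogous statements for $\partial Q_2/\partial a$ and $\partial Q_2/\partial b$ follow identically, with the roles of the second and third coordinates swapped.

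Finally, the smoothness of $J_k$ is obtained as in Lemma~\ref{zhongxin}: on each of $U_{q_1}$ and $U_{q_2}$ the formula is $C^\infty$ because $\phi$ is, and outside $U_{q_1}\cup U_{q_2}$ the map is the identity; since $\phi$ and all its derivatives vanish on the boundary regions, the gluing is $C^\infty$ by \cite[Proposition~5.7]{Lee}. Because the two supports $U_{q_1}$ and $U_{q_2}$ are disjoint, we can carry out the gluing separately without interference. I expect no real obstacle here; the only subtlety is bookkeeping the asymmetric roles played by $q_1$ (where the central dynamics is being \emph{weakened}, forcing the use of $\lambda_u$ and the second estimate in~\ref{budeng2}) versus $q_2$ (where it is being \emph{strengthened} on the strong-stable direction via $\tfrac{1}{\lambda_{ss}}$), but both reduce to the same monotonicity-plus-cutoff template.
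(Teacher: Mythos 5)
Your proof is correct and takes essentially the same route the paper intends: the paper gives no separate argument for Lemma~\ref{mix}, only the remark that it is ``Similar to Lemma~\ref{zhongxin},'' and you have carried out exactly that transposition — using the decreasing affine map $X\mapsto(\tfrac12-\lambda_u)X+\lambda_u$ with the bounds from~\ref{budeng} and~\ref{budeng2} for $Q_1$, the analogous map with $\tfrac1{\lambda_{ss}}$ for $Q_2$ (where only the lower bound is claimed), the $\delta/k$ shrinking-support argument for the cross-derivative limits, and the disjointness of $U_{q_1}$ and $U_{q_2}$ to glue the two local modifications. The bookkeeping you flag — that on $U_{q_2}$ it is $J_k^{-1}$ rather than $J_k$ which is given by a formula, so one checks $\partial Q_2/\partial c>0$ to get invertibility and then reads off $J_k(\Lambda(q_2))=\Lambda(q_2)$ from $J_k^{-1}(\Lambda(q_2))=\Lambda(q_2)$ — is exactly the subtle point one must handle, and you handle it correctly.
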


With the above preparation, we now define the map
$$
G_k := B \circ J_k.
$$

By a straightforward computation, we obtain:
{\color{black}$$
DG_k|_{\Lambda(q_1)}=
\begin{pmatrix}
\lambda_{uu} & 0 & 0\\[3pt]
 \frac{\partial Q_1}{\partial a} &  \frac{\partial Q_1}{\partial b}  & \frac{\partial Q_1}{\partial c}\\[3pt]
0 & 0 & \lambda_{ss}
\end{pmatrix},\quad   DG_k|_{\Lambda(q_2)}=\begin{pmatrix}
\lambda_{uu} & 0 & 0\\[6pt]
0 & \lambda_{u} & 0\\[8pt]
* &
* &
 (\frac{\partial Q_2}{\partial c})^{-1}
\end{pmatrix}
$$
and
$$
DG_k^{-1}|_{G_k(\Lambda(q_1))}=
\begin{pmatrix}
\frac{1}{\lambda_{uu}} & 0 & 0\\[5pt]
* &  (\frac{\partial Q_1}{\partial b})^{-1}  & *\\[5pt]
0 & 0 & \frac{1}{\lambda_{ss}}
\end{pmatrix}
\quad 
DG_k^{-1}|_{G_k(\Lambda(q_2))}=
\begin{pmatrix}
\frac{1}{\lambda_{uu}} & 0 & 0\\[4pt]
0 & \frac{1}{\lambda_{u}} & 0\\[6pt]
* & * & \frac{\partial Q_2}{\partial c}
\end{pmatrix}.
$$ (The inverse map is considered to ensure that
$$
F^{cs}_k \subset \mathcal{C}_\varepsilon(E^s, E^u),
$$
as shown in following Lemma~\ref{jixian2}.)}
Since $J_k$ coincides with the identity map on 
$\mathbb{T}^3 \setminus \big(\Lambda(q_1) \cup \Lambda(q_2)\big)$,
it follows that $DG_k$ agrees with the original B on $\mathbb{T}^3 \setminus \big(\Lambda(q_1) \cup \Lambda(q_2)\big)$. {\color{black} In fact, one can verify that the entries marked with an asterisk $(*)$ are respectively given by
$$
\frac{\partial Q_2}{\partial a}\cdot c_1, \quad 
\frac{\partial Q_2}{\partial b}\cdot c_2, \quad 
\frac{\partial Q_1}{\partial a}\cdot b_1, \quad 
\frac{\partial Q_1}{\partial c}\cdot b_2, \quad
\frac{\partial Q_2}{\partial a}\cdot d_1, \quad 
\frac{\partial Q_2}{\partial b}\cdot d_2,
$$
where $c_1, c_2, b_1, b_2, d_1, d_2$ are functions defined on the corresponding domains, satisfying
$$
|c_1|+|c_2|+|b_1|+|b_2|+|d_1| + |d_2| \le \xi
$$
for some constant $\xi > 0$.}
By Lemma~\ref{mix} and the uniqueness of dominated splitting, similar to Lemma~\ref{jixian}, we obtain the following result.

\begin{lem}\label{jixian2}
For any $\varepsilon>0$, there exists $K(\varepsilon)$ such that for every $k\ge K(\varepsilon)$,
 $$
DG_k\bigl(\mathcal{C}_\varepsilon(E^{uu},E^u\oplus E^s)\bigr)\subset \mathcal{C}_\varepsilon(E^{uu},E^u\oplus E^s)
\quad\text{and}\quad
DG_k\bigl(\mathcal{C}_\varepsilon(E^{u},E^s)\bigr)\subset \mathcal{C}_\varepsilon(E^{u},E^s),
$$
Consequently, $G_k$ is partially hyperbolic for every $k \geq K(\varepsilon)$ with partially hyperbolic splitting
$$
T\mathbb{T}^3 = F^{uu}_k \oplus_{\succ} F^{cu}_k \oplus_{\succ} F^{cs}_k
$$
such that
$$
F^{cu}_k\subset \mathcal{C}_\varepsilon(E^{u},E^s),\quad  F^{cs}_k\subset\mathcal{C}_\varepsilon(E^s,E^u) \quad\text{and}\quad  F^{cu}_k\oplus F^{cs}_k=E^u\oplus E^s.
$$
\end{lem}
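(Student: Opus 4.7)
The plan is to follow the same cone-invariance scheme used in Lemma~\ref{jixian}, but now with a three-region case analysis over $\mathbb{T}^3\setminus(\Lambda(q_1)\cup\Lambda(q_2))$, $\Lambda(q_1)$, and $\Lambda(q_2)$. Outside the two modified regions, $DG_k=B$ is diagonal in the basis $E^{uu}\oplus E^u\oplus E^{ss}$ with eigenvalues $\lambda_{uu}>\lambda_u>\lambda_{ss}$, so both cones are strictly contracted into themselves. In the two perturbed regions I would use the matrix forms of $DG_k$ displayed in the excerpt, together with the asymptotics supplied by Lemma~\ref{mix}.

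On $\Lambda(q_1)$, Lemma~\ref{mix} gives $\partial Q_1/\partial b\in[\tfrac{1}{2},\tfrac{\lambda_{uu}}{2}]$ with $\partial Q_1/\partial a,\partial Q_1/\partial c\to 0$. For the $uu$-cone, a vector $v=av^{uu}+bv^u+cv^{ss}$ with $\max(|b|,|c|)\le\varepsilon|a|$ has image whose transverse component is bounded by $(\tfrac{\lambda_{uu}}{2}+o(1))\varepsilon|a|+o(1)|a|+\lambda_{ss}\varepsilon|a|$, which is less than $\varepsilon\lambda_{uu}|a|$ for large $k$ since $\tfrac{\lambda_{uu}}{2}+\lambda_{ss}<\lambda_{uu}$. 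For the $u$-cone, I would restrict to the $DG_k$-invariant plane $E^u\oplus E^{ss}$ (invariance is visible from the block-triangular structure) and use $\partial Q_1/\partial b\ge\tfrac{1}{2}>\lambda_{ss}$ together with $\partial Q_1/\partial c\to 0$ to close the cone.

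On $\Lambda(q_2)$, inverting $DJ_k^{-1}$ and composing with $DB$ yields, in the same basis,
\[
DG_k|_{\Lambda(q_2)}=\begin{pmatrix}\lambda_{uu}&0&0\\ 0&\lambda_u&0\\ *_1&*_2&1/(\partial Q_2/\partial c)\end{pmatrix},
\]
where $|*_1|,|*_2|$ are of order $|\partial Q_2/\partial a|,|\partial Q_2/\partial b|\to 0$ and $1/(\partial Q_2/\partial c)\in(0,2]$. The $u$-cone inclusion then reduces to the spectral gap $\lambda_u>2$ (ensured by the choice of eigenvalues), and the $uu$-cone inclusion reduces to $\lambda_{uu}$ dominating $\lambda_u+2$ plus vanishing coupling. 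Both are arranged by taking $n$ large in $B=C^{2n}$, which makes the gap between $\lambda_{uu}$ and the worst perturbed contraction $1/(\partial Q_2/\partial c)\le 2$ arbitrarily large.

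Once both cone inclusions hold for all $k\ge K(\varepsilon)$, the partially hyperbolic splitting $T\mathbb{T}^3=F^{uu}_k\oplus_{\succ}F^{cu}_k\oplus_{\succ}F^{cs}_k$ follows from the standard cone-field criterion. The identity $F^{cu}_k\oplus F^{cs}_k=E^u\oplus E^{ss}$ is immediate from the $DG_k$-invariance of the plane $E^u\oplus E^{ss}$ in both matrices, and the inclusion $F^{cs}_k\subset\mathcal{C}_\varepsilon(E^{ss},E^u)$ follows by 2D duality inside that invariant plane: backward invariance of the cone around $E^{ss}$ is equivalent to the forward invariance of the $u$-cone already established. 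The main obstacle I anticipate is the estimate on $\Lambda(q_2)$: unlike $\Lambda(q_1)$ (where the perturbation only weakens the center, keeping it above $\lambda_{ss}$), the $\Lambda(q_2)$ perturbation weakens the strong-stable direction past $1$, so cone closure cannot be achieved by uniform contraction alone and must rely on the spectral gap built into $B=C^{2n}$ through the choice of $n$.
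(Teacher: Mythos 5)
Your overall strategy — region-by-region cone estimates for $DG_k$ over $\mathbb{T}^3\setminus(\Lambda(q_1)\cup\Lambda(q_2))$, $\Lambda(q_1)$, and $\Lambda(q_2)$ using the displayed matrices and Lemma~\ref{mix} — matches the paper's intended argument, and your closure of the $uu$- and $u$-cones on both modified regions is essentially correct. (Minor point: on $\Lambda(q_2)$ the $uu$-cone only requires $\lambda_{uu}>\lambda_u$, not $\lambda_{uu}>\lambda_u+2$; bound the transverse $2\times 2$ block by its largest diagonal entry $\lambda_u$ rather than summing $\lambda_u$ and $2$, and the coupling terms are $o(1)$.)

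However, deducing $F^{cs}_k\subset\mathcal{C}_\varepsilon(E^{ss},E^u)$ by ``$2$D duality'' is a genuine gap. Forward invariance of $\mathcal{C}_\varepsilon(E^u,E^{ss})$ under a $2\times 2$ block $M$ is equivalent only to $M^{-1}$ preserving the \emph{complement} of that cone, namely $\mathcal{C}_{1/\varepsilon}(E^{ss},E^u)$, which for small $\varepsilon$ is far wider than the $\mathcal{C}_\varepsilon(E^{ss},E^u)$ you need. Concretely, $M=\left(\begin{smallmatrix}5&1\\0&1\end{smallmatrix}\right)$ sends $\mathcal{C}_\varepsilon(e_1)$ strictly into itself for all small $\varepsilon$, yet $M^{-1}(0,1)=(-\tfrac15,\,1)\notin\mathcal{C}_\varepsilon(e_2)$ once $\varepsilon<\tfrac15$; so forward invariance of the $u$-cone does \emph{not} locate $F^{cs}_k$ in a thin cone about $E^{ss}$. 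One must check directly that $DG_k^{-1}$ maps $\mathcal{C}_\varepsilon(E^{ss},E^u)$ into itself — this is precisely why the paper writes out the matrices $DG_k^{-1}|_{G_k(\Lambda(q_i))}$ and remarks that ``the inverse map is considered.'' The verification has the same shape as your forward estimates, resting on the diagonal dominations $(\partial Q_1/\partial b)^{-1}\le 2<1/\lambda_{ss}$ on $G_k(\Lambda(q_1))$ and $1/\lambda_u<\tfrac12\le\partial Q_2/\partial c$ on $G_k(\Lambda(q_2))$, with the off-diagonal entries tending to $0$ as $k\to\infty$. Supplying that backward cone check completes the proof.
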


In what follows, we let $\varepsilon$ in Lemma~\ref{jixian2} so that
$$
2\varepsilon^2 \;\le\; \kappa_2^2.
$$
For   any $k\ge K(\varepsilon)$  in the Lemma~\ref{jixian2}, by combining the criterion in Subsection~\ref{criter} with the proof of Theorem~\ref{main2}—in a similar manner, except that we now consider the projection
$$
\pi^x_{uu} : \mathcal{F}^{uu}(x) \times \mathcal{F}^u(x) \times \mathcal{F}^{ss}(x) \to \mathcal{F}^{uu}(x), \quad \pi^x_{uu}(x,y,z) = x,
$$
—we obtain that for every ergodic Gibbs $u$-state $\mu$ of $G_k$, 
$$
\mu(\Lambda(q_i)) \leq\; \frac{(1+\kappa^2_2)^{3/2}}{100} + \frac{1}{100} \quad \text{for each } i=1,2.
$$
Similar to relation~\ref{contro}, when $x \notin \Lambda(q_1)$, we have 
$$
 \left| \det\left( DG_k \big|_{F^{cu}_k} \right) \right|\ge \frac{1}{\sqrt{1+\kappa_2^2}} \lambda_{u}.
$$
 and when $x \notin \Lambda(q_2)$, we have
{\color{black}\begin{equation}\label{cs}
 \left| \det\left( DG_k \big|_{F^{cs}_k} \right) \right|\le \sqrt{1+\kappa_2^2} {\color{black}\lambda_{ss}}
\end{equation}
(\textbf{Explanation:} Keep in mind that, whenever $x \notin \Lambda(q_2)$,
$$
DG_k(F^{cs}_k) = F^{cs}_k \in \mathcal{C}_\varepsilon(E^s, E^u), 
\quad \text{and} \quad DG_k(E^u) = E^u.)
$$ Then, by an argument similar to that for relation~\ref{contro}, we obtain relation~\ref{cs}.
}

We shall now prove the following statement.
\begin{lem}\label{main3lem}
There exists $K$ such that every $k\ge K$ ($\ge K(\varepsilon)$),  $F^{cu}_k$ is mostly expanding (but not uniformly expanding)  and $F^{cs}_k$ is mostly contracting (but not uniformly contracting).
\end{lem}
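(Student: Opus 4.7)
The plan is to imitate the proof of Theorem~\ref{main2} separately for each of the two non-strong bundles. As noted in the excerpt, the criterion in Subsection~\ref{criter}, combined with the slope estimate~\ref{slope2} and the decomposition of $G_k^n(L)$ into "long" and "short" segments along strong-unstable leaves, yields the bound
$$
\mu(\Lambda(q_i)) \;\le\; \frac{(1+\kappa_2^2)^{3/2}}{100} + \frac{1}{100}, \qquad i=1,2,
$$
for every ergodic Gibbs $u$-state $\mu$ of $G_k$. I would then split the two claims into integrated Jacobian estimates on $\Lambda(q_i)$ and on its complement, and finally invoke the two tailored inequalities~\ref{kappa2} and~\ref{kappa33}.

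For the mostly expanding property of $F^{cu}_k$: by Birkhoff and ergodicity, it suffices to show $\int \log |\det(DG_k|_{F^{cu}_k})|\, d\mu > 0$. Outside $\Lambda(q_1)$, the map $J_k$ leaves the $E^u$ direction unaltered, and the cone $F^{cu}_k \subset \mathcal{C}_\varepsilon(E^u, E^s)$ together with the block-upper-triangular form of $DG_k$ (exactly as in the computation leading to relation~\ref{contro}) yields $|\det(DG_k|_{F^{cu}_k})| \ge \tfrac{1}{\sqrt{1+\kappa_2^2}}\lambda_u$. Inside $\Lambda(q_1)$, Lemma~\ref{mix} gives $\partial Q_1/\partial b \ge 1/2$; combined with $2\varepsilon^2 \le \kappa_2^2$ and the fact that $\partial Q_1/\partial a, \partial Q_1/\partial c \to 0$ as $k\to\infty$, one obtains $|\det(DG_k|_{F^{cu}_k})| \ge 1/2 - \kappa_2$ for $k$ large. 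Since $\log(1/2-\kappa_2)<0<\log(\tfrac{1}{\sqrt{1+\kappa_2^2}}\lambda_u)$, monotonicity in the weight combined with the mass bound above reduces $\int \log |\det(DG_k|_{F^{cu}_k})|\, d\mu$ to the left-hand side of inequality~\ref{kappa2}, which is positive.

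The mostly contracting property of $F^{cs}_k$ is symmetric: outside $\Lambda(q_2)$, relation~\ref{cs} provides $|\det(DG_k|_{F^{cs}_k})| \le \sqrt{1+\kappa_2^2}\,\lambda_{ss}$, while inside $\Lambda(q_2)$, $\partial Q_2/\partial c \ge 1/2$ combined with cone corrections and Lemma~\ref{mix} yields $|\det(DG_k|_{F^{cs}_k})| \le 2 + \kappa_2$; inequality~\ref{kappa33} then forces $\int \log |\det(DG_k|_{F^{cs}_k})|\, d\mu < 0$. The failure of uniformity is immediate from the fixed points: at $q_1$ one computes $\partial Q_1/\partial b|_{q_1} = 1/2$, so $DG_k|_{F^{cu}_k(q_1)}$ acts as multiplication by $1/2$, preventing uniform expansion; symmetrically, $(\partial Q_2/\partial c)^{-1}|_{q_2} = 2 > 1$ prevents uniform contraction of $F^{cs}_k$.

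The main subtlety I anticipate is verifying the cone-corrected Jacobian bounds $|\det(DG_k|_{F^{cu}_k})| \ge 1/2 - \kappa_2$ inside $\Lambda(q_1)$ and its dual $|\det(DG_k|_{F^{cs}_k})| \le 2 + \kappa_2$ inside $\Lambda(q_2)$. These are quadratic-in-$\varepsilon$ estimates of the same flavor as those carried out in Lemma~\ref{delta} and Proposition~\ref{volume}, and are precisely the reason the constants $K(\varepsilon)$ and the choice $2\varepsilon^2 \le \kappa_2^2$ were introduced alongside Lemma~\ref{jixian2}. Everything else is a straightforward repackaging of the Theorem~\ref{main2} argument, and the $C^{1+}$-robustness of all conclusions follows from the $C^1$-openness of partial hyperbolicity together with the $C^{1+}$-openness of mostly expanding/contracting behavior under domination.
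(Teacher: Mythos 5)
Your proposal is correct and follows essentially the same route as the paper: the same mass bound $\mu(\Lambda(q_i))\le\frac{(1+\kappa_2^2)^{3/2}}{100}+\frac{1}{100}$ derived from the Subsection~\ref{criter} criterion and the slope estimate, the same four Jacobian bounds (two off the support of the perturbation, two cone-corrected bounds inside it), plugged into inequalities~\ref{kappa2} and~\ref{kappa33}, with non-uniformity read off at the fixed points. The only cosmetic difference is that the paper carries out the inside-$\Lambda(q_i)$ estimate via an explicit norm ratio for a unit cone vector $v=(0,1,\epsilon_1)$ (resp.\ $v=(0,\epsilon_2,1)$), whereas you phrase it as a "quadratic-in-$\varepsilon$" correction; the content is identical.
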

\begin{proof}
When $x\in\Lambda(q_1)$, for $v=(0,1,\epsilon_1)\in F^{cu}_k$, we can check that
\[
\frac{\|DG_k(v)\|}{\|v\|}=\frac{\sqrt{(\frac{\partial Q_1}{\partial b}+\frac{\partial Q_1}{\partial c}\cdot\epsilon_1)^2+(\lambda_{ss}\epsilon_1)^2}}{\sqrt{\epsilon_1^2+1}}.
\]
{\color{black} When $x\in\Lambda(q_2)$, for $v=(0,\epsilon_2, 1)\in F^{cs}_k$, we can check that
\[
\frac{\|DG_k(v)\|}{\|v\|}=\frac{\sqrt{((\frac{\partial Q_2}{\partial c})^{-1}+*\cdot\epsilon_2)^2+(\epsilon_2\lambda_u)^2}}{\sqrt{\epsilon_2^2+1}}.
\]}
Since both $\epsilon_1$ and $\epsilon_2$ are less than or equal to $\varepsilon$ in Lemma~\ref{jixian2},  {\color{black}  and $\frac{\partial Q_1}{\partial c}$ and $*$} both converge to zero as $k \to +\infty$, we can choose $\varepsilon$ sufficiently small such that
$$
\left| \det\left( DG_k \big|_{F^{cu}_k} \right) \right| \geq \frac{1}{2} - \kappa_2 \quad \text{when} \quad x \in \Lambda(q_1)
$$
{\color{black}$$
\left| \det\left( DG_k \big|_{F^{cs}_k} \right) \right| \leq 2 +\kappa_2 \quad \text{when} \quad x \in \Lambda(q_2).
$$}
It follows from relation~\ref{kappa2} that
$$
\int_{\Lambda(q_1)\cup (\T^3\setminus\Lambda(q_1))}\log(\det DG_k|_{F^{cu}_k})d\mu\ge (\frac{(1+\kappa^2_2)^{\frac{3}{2}}}{100}+\frac{1}{100})\log(\frac{1}{2}-\kappa_2)+(\frac{99}{100}-\frac{(1+\kappa_2^2)^{\frac{3}{2}}}{100})\log(\frac{1}{\sqrt{1+\kappa_2^2}} \lambda_{u} )>0.
$$
{\color{black} It follows from relation~\ref{kappa33} that
$$
\int_{\Lambda(q_2)\cup (\T^3\setminus\Lambda(q_2))}\log(\det DG_k|_{F^{cs}_k})d\mu\le  (\frac{(1+\kappa^2_2)^{\frac{3}{2}}}{100}+\frac{1}{100})\log(2+\kappa_2)+(\frac{99}{100}-\frac{(1+\kappa_2^2)^{\frac{3}{2}}}{100})\log(\sqrt{1+\kappa_2^2} {\color{black}\lambda_{ss}})<0.
$$}

We can directly verify that
$$
DG_k(q_1) = 
\begin{pmatrix}
\lambda_{uu} & 0 & 0\\[3pt]
0 & \frac{1}{2} & 0\\[3pt] 
0 & 0 & \lambda_{ss}
\end{pmatrix}
\quad \text{and} \quad 
DG_k(q_2) = 
\begin{pmatrix}
\lambda_{uu} & 0 & 0\\[3pt]
0 & \lambda_{u} & 0\\[3pt] 
0 & 0 & 2
\end{pmatrix},
$$
where $p_1, p_2$ remain fixed points. Hence, it follows directly that $F^{cu}_k$ is not uniformly expanding, and $F^{cs}_k$ is not uniformly contracting.
\end{proof}

\begin{proof}[Proof of Theorem~\ref{main3}]
By Lemma~\ref{main3lem}, we can directly find $G$. Since the mixed center is $C^{1+}$-robust, we can thus complete the proof.
\end{proof}

{\color{black}
We point out that our approach and that of Katok and Hasselblatt \cite[Part~4]{KatokHasselblatt1995} have subtle differences regarding the construction of DA, based on the following facts: In the neighborhood of certain fixed points, we work in the inverse direction. From the forward perspective, this corresponds to modifying the contraction at the fixed point to an expansion along $E^{ss}$. However, unlike in \cite[Part~4]{KatokHasselblatt1995}, in our setting, one cannot directly modify the contraction to an expansion in the forward direction of the map.  For instance, if one were to modify in the forward direction of the map, then on $U_{q_2}$ one could set
$$
J_k(a,b,c) = \Bigl(a,\, b,\, \tfrac{1}{\lambda_{ss}} \cdot R_2(a,b,c)\Bigr),
$$
where
$$
R_2(a,b,c) 
= \phi(kc) \cdot \phi\bigl(\sqrt{a^2 + b^2}\bigr) \cdot \Bigl(2 - \lambda_{ss}\Bigr)c \;+\; \lambda_{ss} c.
$$
Let $r=\sqrt{a^2+b^2}$. We compute
$$
   \frac{\partial R_2}{\partial c}
   =  \bigl(kc\,\phi'(kc)+\phi(kc)\bigr)\,\phi(r)\,(2-\lambda_{ss})+\lambda_{ss}.
$$
The condition
$$
\frac{\partial R_2}{\partial c} > 0,
$$
which guarantees that the map is a diffeomorphism, is equivalent to
\begin{equation}\label{maodun}
\bigl(kc \, \phi'(kc) + \phi(kc)\bigr)\, \phi(r)\;>\;\frac{1}{1 - \tfrac{2}{\lambda_{ss}}}.
\end{equation}
Since the effective domain of $\phi$ actually depends on the fixed $\delta$, and since, by relation~\ref{budeng}, we may have 
$$
\inf \bigl\{\,(kc\,\phi'(kc)+\phi(kc))\,\phi(r)\,\bigr\} \;=\; -m,
$$
(it is straightforward to check that the derivative of $y \mapsto y\,\phi(y)$ can take negative values).Thus, if inequality~\ref{maodun} holds, then it necessarily follows that 
\begin{equation}\label{maodun2}
 \frac{1}{1 - \tfrac{2}{\lambda_{ss}}}\;<\;-m.
\end{equation}
However, because we require $\lambda_{ss}$ to be sufficiently small (see relation~\ref{kappa33}) after $\phi$ is chosen, this inequality~\ref{maodun2} eventually fails once $\lambda_{ss}$ becomes too small.

The definition of $I_k$ in Section~\ref{aaa} is analogous. In particular, in order to guarantee that the definition is well defined, if one uses the same $C^\infty$ bump function $\psi$, it is necessary to work with the inverse.
}

\section{Appendix}

In fact, we can prove the following more general result:
\begin{lem}\label{tuilun}
For any $\gamma>0$, there exist constants $\varepsilon_0 > 0$ and $M > 0$ such that
$$
\frac{c^2 + cu}{(1+\epsilon^2 + c^2)(1+\epsilon^2)} \ge M, \quad \text{for all } |u| \le \varepsilon_0, \ |\epsilon| \le \varepsilon_0, \text{ and } |c|\ge\gamma.
$$
\end{lem}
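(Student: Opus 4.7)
The plan is to reduce the two-parameter estimate to a one-variable monotonicity check by exploiting the factorization of the numerator. The key observation is that
\[
c^2 + cu \;=\; c^2\Bigl(1+\tfrac{u}{c}\Bigr),
\]
and since $|c|\ge\gamma$, the condition $\varepsilon_0\le \gamma/2$ forces $|u/c|\le 1/2$, so $1+u/c\ge 1/2$. This immediately gives the uniform lower bound $c^2 + cu \ge c^2/2$, which removes the $u$-dependence at a controlled cost. This is the only step in the argument where the restriction $|c|\ge\gamma$ is actually used.

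Next I would bound the denominator from above: since $|\epsilon|\le\varepsilon_0$, we have $(1+\epsilon^2+c^2)(1+\epsilon^2)\le (1+\varepsilon_0^2+c^2)(1+\varepsilon_0^2)$. After these two reductions, it suffices to produce a positive lower bound for
\[
\frac{c^2/2}{(1+\varepsilon_0^2+c^2)(1+\varepsilon_0^2)}
\]
uniformly over $|c|\ge\gamma$. A one-line computation shows that $c\mapsto c^2/(1+\varepsilon_0^2+c^2)$ is strictly increasing in $|c|$, so its infimum on $\{|c|\ge\gamma\}$ is attained at $|c|=\gamma$.

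Finally I would set
\[
\varepsilon_0 := \gamma/2, \qquad M := \frac{\gamma^2}{2(1+\varepsilon_0^2+\gamma^2)(1+\varepsilon_0^2)} \;>\; 0,
\]
which makes the desired inequality hold by construction. There is no genuine obstacle; the only point worth tracking is that $\varepsilon_0$ must be chosen relative to $\gamma$ (not independently) so that the cross term $cu$ cannot overwhelm the dominant $c^2$ in the numerator. Lemma~\ref{paowu} then follows as the specialization $\gamma = 1/100$.
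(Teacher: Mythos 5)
Your proof is correct and follows essentially the same route as the paper's: both factor $c^2$ out of the numerator to control the cross term $cu$ via $|u/c|\le\varepsilon_0/\gamma$, bound the $\epsilon$-factors by $\varepsilon_0$, and observe that the remaining quantity is minimized at $|c|=\gamma$ (the paper does this by replacing $1/c^2$ with $1/\gamma^2$, you do it via the monotonicity remark — the same fact). The only differences are the cosmetic choice of constants ($\varepsilon_0=\gamma/2$ versus the paper's $\gamma/10$).
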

Lemma~\ref{paowu} can be considered a corollary of Lemma~\ref{tuilun}.
\begin{proof}
Notice that
$$
\frac{c^2 + cu}{(1+\epsilon^2 + c^2)(1+\epsilon^2)}=\frac{1+\frac{u}{c}}{(\frac{1}{c^2}+\frac{\epsilon^2}{c^2}+1)(1+\epsilon^2)}\ge\frac{1-\frac{|u|}{\gamma}}{(\frac{1}{\gamma^2}+\frac{\epsilon^2}{\gamma^2}+1)(1+\epsilon^2)}
$$
If we want the inequality
$$
\frac{c^2 + cu}{(1+\epsilon^2 + c^2)(1+\epsilon^2)} \;\ge\; M > 0
$$
to hold, it suffices to require that
$$
\varepsilon_0 = \tfrac{\gamma}{10}, 
\quad |\epsilon| \le \varepsilon_0, 
\quad |u| \le \varepsilon_0, 
\quad M = \tfrac{9}{10}\cdot \frac{1}{\Bigl(\tfrac{1}{\gamma^2}+\tfrac{101}{100}\Bigr)\Bigl(1+\tfrac{\gamma^2}{100}\Bigr)}.
$$
This concludes the proof.
\end{proof}

\section{Acknowledgements} 

We are grateful for the courses offered by Professor Liao and Professor Shi at the Tianyuan Mathematics Southeast Center at Xiamen University in 2025, which inspired some of the ideas behind the proofs.

\smallskip

E-mail address: zhanghangyue@nju.edu.cn

\end{document}